\newtheorem{theo}{Theorem}[section]
\newtheorem{lem}[theo]{Lemma}
\newtheorem{prop}[theo]{Proposition}
\theoremstyle{definition}
\newtheorem{defi}[theo]{Definition}
\theoremstyle{remark}
\newtheorem*{rem}{Remark}
\newtheorem*{eg*}{Example}
\def\mycaption{\caption}
\newcommand{\Dfn}[1]{\emph{#1}}
\newcommand{\Set}[1]{\ensuremath{\mathcal{#1}}}            
\DeclareMathOperator{\wex}{wex}
\DeclareMathOperator{\cro}{cro}
\newcommand{\qbin}[2]{\genfrac{[}{]}{0pt}{}{#1}{#2}_q}
\let\size=\abs
\title{Crossings, Motzkin paths and Moments}
\author{Matthieu Josuat-Verg\`es}
\address{LRI, CNRS and Universit\'e Paris-Sud, B\^atiment 490, 91405
  Orsay, France}
\author{Martin Rubey}
\address{Institut f\"ur Algebra, Zahlentheorie und Diskrete
  Mathematik, Leibniz Universit\"at Hannover, Welfengarten 1, 30167
  Hannover, Deutschland}
\dedicatory{Dedicated to Jean-Guy Penaud}
\begin{document}

\begin{abstract}
  Kasraoui, Stanton and Zeng, and Kim, Stanton and Zeng introduced
  certain $q$-analogues of Laguerre and Charlier polynomials.  The
  moments of these orthogonal polynomials have combinatorial models
  in terms of crossings in permutations and set partitions.  The aim
  of this article is to prove simple formulas for the moments of the
  $q$-Laguerre and the $q$-Charlier polynomials, in the style of the
  Touchard-Riordan formula (which gives the moments of some
  $q$-Hermite polynomials, and also the distribution of crossings in
  matchings).

  Our method mainly consists in the enumeration of weighted Motzkin
  paths, which are naturally associated with the moments. Some steps
  are bijective, in particular we describe a decomposition of paths
  which generalises a previous construction of Penaud for the case of
  the Touchard-Riordan formula.  There are also some non-bijective
  steps using basic hypergeometric series, and continued fractions
  or, alternatively, functional equations.
\end{abstract}

\maketitle
\section{Introduction}
\label{sec:introduction}

Our motivation is to derive in a uniform way generating functions for
matchings, set partitions and permutations refined by the number of
crossings.  We achieve this by enumerating certain weighted Motzkin
paths, which in turn prompt us to consider these counts as moments of
certain families of orthogonal polynomials.  In some cases, formulas
for these moments are already known.  However, the method of proof we
present in this algorithm is quite general, and leads to very simple
formulas.

Let us first define the notion of crossings in matchings, set
partitions and permutations.  To do so, it is best to draw the
objects we are interested in in a certain standard way.  We begin
with the set of matchings (or fixed-point free involutions) $\Set
M_{2n}$ of $\{1,\dots,2n\}$: these are drawn by putting the numbers
from $1$ to $2n$ in this order on a straight line, and then
connecting paired numbers by an arc.  Of course, arcs are always
drawn in a way such that any two arcs cross at most once, and no more
than two arcs intersect at any point, see the first picture in
Figure~\ref{fig:matching} for an example.  Then, a \Dfn{crossing in a
  matching} is, as one would expect, a pair of matched points
$\{i,j\}$ and $\{k,l\}$ with $i<k<j<l$, pictorially:
\begin{center}
{\setlength{\unitlength}{1mm}
\begin{picture}(28,10)(-30,-3)
\put(-30,0){\line(1,0){28}}
\put(-28,0){\circle*{1}}\put(-28,0){\makebox(0,-5)[c]{\small $i$}}
\put(-20,0){\circle*{1}}\put(-20,0){\makebox(0,-5)[c]{\small $k$}}
\put(-12,0){\circle*{1}}\put(-12,0){\makebox(0,-5)[c]{\small $j$}}
\put(-4,0){\circle*{1}}\put(-4,0){\makebox(0,-5)[c]{\small $l$}}
\qbezier(-28,0)(-20,12)(-12,0) \qbezier(-20,0)(-12,12)(-4,0)
\end{picture}}
\end{center}

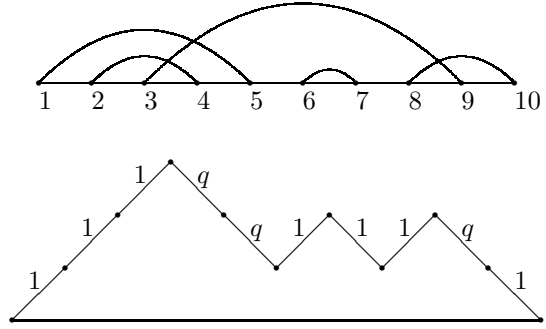
\begin{figure}
  \setlength{\unitlength}{20pt}
  \begin{picture}(10,7.5)(0,-4.5)
    \put(0,0.5){\line(1,0){9}}
    \put(0,0){\hbox{$1$}}\put(0,0.5){\circle*{0.1}}
    \put(1,0){\hbox{$2$}}\put(1,0.5){\circle*{0.1}}
    \put(2,0){\hbox{$3$}}\put(2,0.5){\circle*{0.1}}
    \put(3,0){\hbox{$4$}}\put(3,0.5){\circle*{0.1}}
    \put(4,0){\hbox{$5$}}\put(4,0.5){\circle*{0.1}}
    \put(5,0){\hbox{$6$}}\put(5,0.5){\circle*{0.1}}
    \put(6,0){\hbox{$7$}}\put(6,0.5){\circle*{0.1}}
    \put(7,0){\hbox{$8$}}\put(7,0.5){\circle*{0.1}}
    \put(8,0){\hbox{$9$}}\put(8,0.5){\circle*{0.1}}
    \put(9,0){\hbox{$10$}}\put(9,0.5){\circle*{0.1}}
    \qbezier(0,0.5)(2.0,2.5)(4,0.5)
    \qbezier(1,0.5)(2.0,1.5)(3,0.5)
    \qbezier(2,0.5)(5.0,3.5)(8,0.5)
    \qbezier(5,0.5)(5.5,1.0)(6,0.5)
    \qbezier(7,0.5)(8.0,1.5)(9,0.5)
    \put(-0.5,-4){\line(1,0){10}}
    \put(-0.5,-4){\circle*{0.1}}  \put(-0.5,-4){\line(1,1){1}} \put(-0.2,-3.4){$1$}
    \put(0.5,-3){\circle*{0.1}}   \put(0.5,-3){\line(1,1){1}}  \put(0.8,-2.4){$1$}
    \put(1.5,-2){\circle*{0.1}}   \put(1.5,-2){\line(1,1){1}}  \put(1.8,-1.4){$1$}
    \put(2.5,-1){\circle*{0.1}}   \put(2.5,-1){\line(1,-1){1}} \put(3.0,-1.4){$q$}
    \put(3.5,-2){\circle*{0.1}}   \put(3.5,-2){\line(1,-1){1}} \put(4.0,-2.4){$q$}
    \put(4.5,-3){\circle*{0.1}}   \put(4.5,-3){\line(1,1){1}}  \put(4.8,-2.4){$1$}
    \put(5.5,-2){\circle*{0.1}}   \put(5.5,-2){\line(1,-1){1}} \put(6.0,-2.4){$1$}
    \put(6.5,-3){\circle*{0.1}}   \put(6.5,-3){\line(1,1){1}}  \put(6.8,-2.4){$1$}
    \put(7.5,-2){\circle*{0.1}}   \put(7.5,-2){\line(1,-1){1}} \put(8.0,-2.4){$q$}
    \put(8.5,-3){\circle*{0.1}}   \put(8.5,-3){\line(1,-1){1}} \put(9.0,-3.4){$1$}
    \put(9.5,-4){\circle*{0.1}}   
  \end{picture}
  \caption{A matching of $\{1,\dots,10\}$ with $3$ crossings and the associated
    ``histoire de Hermite''.}
  \label{fig:matching}
\end{figure}

Indeed, the motivating example for this article is the
Touchard-Riordan formula, which gives, for each $n$, the generating
polynomial according to crossings for perfect matchings of the set
$\{1,\dots,2n\}$.  Denoting by $\cro(M)$ the number of crossings of
the matching $M$, we have:
\begin{theo}[Touchard~\cite{Tou52},
  Riordan~\cite{Rio75}]\label{th:Touchard}
  \begin{equation}\label{eq:Touchard}
    \sum_{M\in\Set M_{2n}} q^{\cro(M)}
    = \frac{1}{(1-q)^n}
    \sum_{k\geq0}(-1)^k\left(\binom{2n}{n-k}-\binom{2n}{n-k-1}\right)
    q^{\binom{k+1}{2}}.
  \end{equation}
\end{theo}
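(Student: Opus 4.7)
My plan is to follow the general strategy outlined in the abstract: recast the left-hand side as a weighted sum over Dyck paths, and then apply a Penaud-type sign-reversing involution to collapse this sum into the shape of the right-hand side.

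First, I would invoke the classical bijection between matchings of $\{1,\dots,2n\}$ and ``histoires de Hermite'': Dyck paths of length $2n$ in which the $i$-th down step, occurring at some height $h_i$, is labelled by an integer in $\{0,1,\dots,h_i-1\}$ recording which currently open arc is being closed, in such a way that the crossing statistic corresponds to the sum of all labels. Summing over the labels gives
$$\sum_{M\in\Set M_{2n}} q^{\cro(M)} \;=\; \sum_{D}\prod_{i=1}^n[h_i]_q \;=\; \frac{1}{(1-q)^n}\sum_{D}\prod_{i=1}^n(1-q^{h_i}),$$
where $D$ ranges over Dyck paths of length $2n$ and $[h]_q=(1-q^h)/(1-q)$. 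Expanding each factor $1-q^{h_i}$ then yields
$$\sum_{M\in\Set M_{2n}} q^{\cro(M)} \;=\; \frac{1}{(1-q)^n}\sum_{(D,S)}(-1)^{|S|}\,q^{\sum_{i\in S}h_i},$$
the sum being over pairs of a Dyck path $D$ together with a subset $S$ of its down steps.

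The heart of the proof, and the main obstacle, is to construct a sign-reversing, weight-preserving involution $\varphi$ on the pairs $(D,S)$. Scanning $D$ from left to right, $\varphi$ should locate the earliest ``toggleable'' down step and flip its membership in $S$, the rule being arranged so that the exponent $\sum_{i\in S}h_i$ is unchanged by the flip. The delicate part is prescribing this rule so that the fixed points are precisely the configurations in which $S$ forms a rigid skeleton---marked down steps at heights $1,2,\dots,k$ for some $k\geq0$---each contributing weight $(-1)^k q^{\binom{k+1}{2}}$. This is exactly Penaud's construction specialised to the Touchard--Riordan setting.

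Finally, one shows that for each $k$ the surviving fixed points are enumerated by the ballot number $\binom{2n}{n-k}-\binom{2n}{n-k-1}$---for instance, by a bijection with nonnegative lattice paths of length $2n$ ending at height $2k$, whose count follows from the reflection principle. Summing $(-1)^k q^{\binom{k+1}{2}}$ against this count over $k\geq0$ and reinstating the prefactor $(1-q)^{-n}$ then recovers the right-hand side of~\eqref{eq:Touchard}.
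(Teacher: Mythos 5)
Your opening reduction is fine and coincides with the paper's starting point: via the ``histoires de Hermite'' bijection the left-hand side equals $(1-q)^{-n}$ times the sum over Dyck paths of length $2n$ with each down step starting at height $h$ weighted $1-q^{h}$, i.e.\ a signed sum over pairs $(D,S)$. The genuine gap is that the central object of your argument, the sign-reversing involution $\varphi$, is never constructed, and the rule you sketch cannot work as stated: toggling the membership in $S$ of a down step starting at height $h_i\geq 1$ changes the exponent $\sum_{i\in S}h_i$ by $h_i\neq 0$, so no rule that only flips membership can preserve the weight. Any correct involution must simultaneously deform the underlying path to compensate for the lost or gained power of $q$ --- this is precisely what makes Penaud's construction nontrivial (he passes through parallelogram polyominoes), and it is visible in the paper's Appendix~B, where the involution $\theta$ of Proposition~\ref{invol} must, after turning a down step of weight $1$ into one of weight $-q^{h}$, translate an entire factor of the path downwards by $v(c)$ so that the two changes cancel. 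Writing ``this is exactly Penaud's construction'' cites the key lemma rather than proving it, so the heart of the theorem is missing from your proposal.

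Note also that the paper does not prove the theorem the way you plan to. It first performs the bijective decomposition of Lemma~\ref{lem:decomposition}: every signed path factors into a left factor in $\mathcal{P}_{2n,2k}(1,0)$, counted by the ballot numbers $\binom{2n}{n-k}-\binom{2n}{n-k-1}$ (Lemma~\ref{dyck_pre}), and a reduced path in $\mathcal{M}^*_{2k}(0,0,1;q)$ (no peaks with both weights ``$+$''), and only then evaluates the reduced paths --- not by an involution, but analytically, via the continued fraction of Proposition~\ref{cfrac2} and its basic hypergeometric evaluation in Proposition~\ref{cfrac1}, which yield $K(0,0,t^2;q)=\sum_{k\geq0}(-t^2)^k q^{\binom{k+1}{2}}$ (Lemma~\ref{qhermite}). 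So your fully bijective route is legitimate in principle (it is Penaud's original proof, and a generalisation of the needed involution is carried out in Appendix~B, recovering Penaud's case when $k=j$ there), but to complete it you must either spell out that involution on $\mathcal{M}^*_{2k}(0,0,1;q)$ --- including the compensating deformation of the path, and a proof that the unique fixed point has weight $(-1)^kq^{\binom{k+1}{2}}$ --- or replace it by the continued-fraction computation; as it stands, both the involution and the claimed description of its fixed points are assertions, not proofs.
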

This has been proved in the 1950's by Touchard, although,
curiously, it seems that the formula was not given explicitly.  This
was later rectified by Riordan.

Quite similar to matchings, a set partition can be depicted by
connecting the numbers on the line which are in one block $B=\{b_1 <
b_2 < \dots < b_l\}$ by arcs $(b_1,b_2)$, $(b_2,b_3)$,\dots,
$(b_{l-1},b_l)$, see Figure~\ref{fig:setpartition} for an example.
Again, a \Dfn{crossing in a set partition} is what one would expect:
a pair of arcs $\{i,j\}$ and $\{k,l\}$ with $i<k<j<l$.  Denoting the
set of set partitions of $\{1,\dots, n\}$ by $\Pi_n$, the number of
crossings in a set partition $\pi$ by $\cro(\pi)$ and the number of
its blocks by $|\pi|$, we will obtain the following $q$-analogue of
the Stirling numbers of the second kind:
\begin{theo}\label{th:Charlier}
  \begin{align}\label{eq:Charlier}
    \sum_{ \substack{{ \pi\in\Pi_n }\\{|\pi|=k}} } q^{\cro(\pi)}
    = \frac1{(1-q)^{n-k}} \sum\limits_{j=0}^{k}
    \sum\limits_{i=j}^{n-k} (-1)^i \left(\tbinom n{k+i}\tbinom
      n{k-j}-\tbinom n{k+i+1}\tbinom n{k-j-1}\right) \qbin i j
    q^{\tbinom{j+1}2},
  \end{align}
  where $\qbin n k = \prod_{i=1}^k\frac{[n-k+i]_q}{[i]_q}$ is the
  $q$-binomial coefficient, and $[n]_q=1+q+\dots+q^{n-1}$.
\end{theo}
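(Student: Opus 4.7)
The plan is to proceed in three stages: a bijective translation of the left-hand side into a sum over weighted Motzkin paths, a Penaud-style decomposition of these paths, and a concluding algebraic step using basic hypergeometric identities.

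First, I would invoke the standard ``histoire'' bijection between set partitions and labelled Motzkin paths: each element of $\{1,\dots,n\}$ produces one step according to its role in its block (opener, closer, interior, or singleton), and the crossing statistic distributes into step weights. Concretely, up steps receive weight $1$; down steps at height $h$ receive weight $[h]_q$; a level step at height $h$ receives weight $[h]_q$ when the element continues an open block and weight $1$ when it is a singleton. Thus the level-step weight at height $h$ is $y + [h]_q$ if we mark each block-creating step (up step or singleton) by $y$. Extracting $[y^k]$ identifies $\sum_{\pi\in\Pi_n,\,|\pi|=k} q^{\cro(\pi)}$ with a sum over Motzkin paths of length $n$ in which the number of up steps plus the number of singleton level steps equals $k$.

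Second, I would expand $[h]_q = (1-q^h)/(1-q)$ in every non-trivial weight. There are $n-k$ such factors along a path (the closer and interior steps), so this immediately produces the prefactor $(1-q)^{-(n-k)}$. Distributing the expansion yields a signed sum indexed by subsets of these steps at which the term $-q^h$ is chosen. I would then adapt Penaud's bijective decomposition, which for matchings (Theorem~\ref{th:Touchard}) reduces such a signed Motzkin-path enumeration to counting lattice paths in a strip, to the present setting where singletons and interior steps must be distinguished. The outcome should be a disjoint pair consisting of a ``peripheral'' configuration and a reduced path whose enumeration, via the reflection principle, produces the difference of binomial products $\binom{n}{k+i}\binom{n}{k-j}-\binom{n}{k+i+1}\binom{n}{k-j-1}$, with $i$ counting the selected peripheral steps.

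Third, the remaining factor $\qbin{i}{j} q^{\binom{j+1}{2}}$ should emerge from applying a terminating basic hypergeometric summation, such as the $q$-Chu--Vandermonde identity or the finite $q$-binomial theorem $\prod_{s=1}^{i}(1-zq^{s}) = \sum_{j} (-1)^{j}\qbin{i}{j} q^{\binom{j+1}{2}} z^{j}$, to the enumeration of singleton level steps interleaved with the remaining steps; the index $j$ then enumerates singletons.

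The principal obstacle I anticipate is constructing the Penaud-style decomposition in the presence of two distinct kinds of level step: singletons, which have no analogue in the matching case, must be tracked through the bijection in a way that still leaves a residual path amenable to classical lattice-path arguments, and the correct separation of their contribution (which furnishes the $q$-binomial and $q^{\binom{j+1}{2}}$ factors) from the continuation steps (which furnishes the alternating binomial difference) is exactly the new combinatorial content needed beyond Theorem~\ref{th:Touchard}. Should the bijective approach prove unwieldy, the alternative flagged in the abstract is available: derive a functional equation for the generating function directly from the $q$-Charlier continued fraction associated to the weights above, and solve it via $q$-series manipulations; this sacrifices combinatorial transparency but provides a uniform algebraic route to \eqref{eq:Charlier}.
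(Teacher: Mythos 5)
Your overall strategy is the same as the paper's (Kasraoui--Zeng histoires, expansion of $[h]_q$ to pull out the power of $1-q$, a Penaud-type decomposition into a Motzkin prefix and a restricted residual path, the prefix giving the binomial differences and the residual paths giving $(-1)^iq^{\binom{j+1}{2}}\qbin{i}{j}$), and your normalisation --- expanding only the $n-k$ steps that carry $[h]_q$, so that the prefactor $(1-q)^{-(n-k)}$ appears at once and the relevant parameters become $(a,b,c,d)=(0,-1,y,1+y)$ --- is a legitimate variant of the paper's uniform multiplication by $(1-q)^n$. However, as written the proposal has genuine gaps at exactly the two places where the work lies. First, the decomposition is only conjectured: you say the outcome ``should be'' a pair (peripheral configuration, reduced path) and flag the two kinds of level steps as the principal obstacle, but you never construct the map nor characterise which reduced paths occur. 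The paper's Lemma~\ref{lem:decomposition} does this: factor out the maximal sub-Motzkin-paths all of whose steps carry the trivial weights ($1$ on up, $d$ on level, $c$ on down); straightening them gives the prefix, deleting them gives a residual path containing no $d$-level step and no peak weighted $1\cdot c$ --- and it is precisely this forbidden-pattern condition, absent from your sketch, that makes the residual class enumerable. Second, and more seriously, the evaluation of the residual paths is not derived. Invoking the finite $q$-binomial theorem presupposes that their generating function has the product form $\sum_i(-t)^i(-cqt;q)_i$; proving that is the heart of the matter, and in the paper it takes either the continued fraction $K(at,bt,ct^2;q)$ with a cancellation involution identifying its coefficients with the restricted paths (Proposition~\ref{cfrac2}), followed by the Ismail--Libis ${}_2\phi_1$ evaluation and a Heine transformation (Proposition~\ref{cfrac1}, Lemma~\ref{qchar}), or else the explicit sign-reversing involution of Appendix~B. ``Apply a terminating summation to the enumeration of singleton level steps'' is not an argument, and the attribution is off: in the decomposition $j$ does not count singletons, it counts the $y$-weighted down steps (equivalently the up/down pairs) of the residual path, while the $y$-steps surviving in the prefix account for the exponent $k-j$ in $\binom{n}{k-j}$.

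Two smaller points. The $y$-refined prefix count is a $2\times2$ determinant; the paper gets it by translating Motzkin prefixes into pairs of non-intersecting lattice paths and applying Lindstr\"om--Gessel--Viennot (Lemma~\ref{motz_pre}). A bare reflection principle yields only the unweighted ballot-type count, so you would need to explain how to reflect while tracking the $y$-statistic. Also, your fallback of writing a functional equation ``for the $q$-Charlier continued fraction associated to the weights above'' targets the wrong object: the $J$-fraction of the moments themselves does not evaluate to the finite formula; what the paper solves by a functional equation (or by Ismail--Libis) is the $T$-fraction-like $K$ attached to the residual paths, which only becomes available after the decomposition has been carried out.
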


\begin{figure}
  \setlength{\unitlength}{20pt}
  \begin{picture}(7,6.5)(0,-4.5)
    \put(0,0.5){\line(1,0){7}}
    \put(0,0){\hbox{$1$}}\put(0,0.5){\circle*{0.1}}
    \put(1,0){\hbox{$2$}}\put(1,0.5){\circle*{0.1}}
    \put(2,0){\hbox{$3$}}\put(2,0.5){\circle*{0.1}}
    \put(3,0){\hbox{$4$}}\put(3,0.5){\circle*{0.1}}
    \put(4,0){\hbox{$5$}}\put(4,0.5){\circle*{0.1}}
    \put(5,0){\hbox{$6$}}\put(5,0.5){\circle*{0.1}}
    \put(6,0){\hbox{$7$}}\put(6,0.5){\circle*{0.1}}
    \put(7,0){\hbox{$8$}}\put(7,0.5){\circle*{0.1}}
    \qbezier(0,0.5)(2.0,2.5)(4,0.5)
    \qbezier(1,0.5)(3.0,2.5)(5,0.5)
    \qbezier(2,0.5)(2.5,1.0)(3,0.5)
    \qbezier(4,0.5)(5.5,2.5)(7,0.5)
    \put(-0.5,-4){\line(1,0){8}}
    \put(-0.5,-4){\circle*{0.1}}  \put(-0.5,-4){\line(1,1){1}}  \put(-0.2,-3.4){$y$}
    \put(0.5,-3){\circle*{0.1}}   \put(0.5,-3){\line(1,1){1}}   \put(0.8,-2.4){$y$}
    \put(1.5,-2){\circle*{0.1}}   \put(1.5,-2){\line(1,1){1}}   \put(1.8,-1.4){$y$}
    \put(2.5,-1){\circle*{0.1}}   \put(2.5,-1){\line(1,-1){1}}  \put(3.0,-1.4){$1$}
    \put(3.5,-2){\circle*{0.1}}   \put(3.5,-2){\line(1,0){1}}   \put(3.8,-1.8){$q$}
    \put(4.5,-2){\circle*{0.1}}   \put(4.5,-2){\line(1,-1){1}}  \put(5.0,-2.4){$q$}
    \put(5.5,-3){\circle*{0.1}}   \put(5.5,-3){\line(1,0){1}}   \put(5.8,-2.8){$y$}
    \put(6.5,-3){\circle*{0.1}}   \put(6.5,-3){\line(1,-1){1}}  \put(7.0,-3.4){$1$}
    \put(7.5,-4){\circle*{0.1}}   
  \end{picture}
  \caption{A set partition of $\{1,\dots,8\}$ into $4$ blocks with $2$
    crossings and the associated ``histoire de Charlier''.}
  \label{fig:setpartition}
\end{figure}
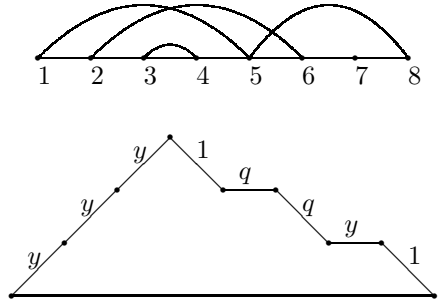

There is an alternative notion of crossings for set partitions, due
to Ehrenborg and Readdy~\cite{ER96}, coming from
juggling patterns.  Namely, we introduce an additional infinite arc
extending to the right from every maximal element of each block,
including singletons, see Figure~\ref{fig:setpartitionER} for an
example.  Denoting the number of crossings in such a drawing of a set
partition $\pi$ by $\cro^*(\pi)$, we have:
\begin{theo}[Gould~\cite{Gou61}]\label{th:Charlier*}
  \begin{equation}\label{eq:Charlier*}
    \sum_{\substack{{ \pi\in\Pi_n }\\{|\pi|=k}}} q^{\cro^*(\pi)}
    =\frac{1}{(1-q)^{n-k}} \sum\limits_{j=0}^{n-k}
    (-1)^j \tbinom n{k+j}\qbin{k+j}{j}.
  \end{equation}
\end{theo}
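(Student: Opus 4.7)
The plan is to reduce Theorem~\ref{th:Charlier*} to a weighted Motzkin-path enumeration, exactly in the spirit in which the paper treats the other theorems. First, one uses the Flajolet correspondence to encode a set partition of $\{1,\dots,n\}$ with $k$ blocks as a Motzkin path of length $n$ with $k$ up-steps, decorated by a choice at each down- or level-step; for the ordinary crossing statistic $\cro$ this gives the weighting visible in Figure~\ref{fig:setpartition}. For Ehrenborg--Readdy crossings the novelty is that an extra infinite arc emanates from each block maximum and crosses exactly the currently-open arcs that survive past that position. A careful bookkeeping shows that this modifies the weights to $y$ on each up-step, $[h]_q + y\,q^{h}$ on a level step at height $h$ (separating the ``continuation'' and ``singleton'' sub-cases) and $q^{h-1}[h]_q$ on a down-step from height $h$ to $h-1$. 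Extracting the coefficient of $y^k$ in the resulting weighted Motzkin-path generating function of length $n$ then gives the left-hand side of \eqref{eq:Charlier*}.

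Since each of the $n-k$ non-up steps carries a factor of the form $[h]_q = (1-q^{h})/(1-q)$, pulling out the prefactor $(1-q)^{-(n-k)}$ that appears on the right of \eqref{eq:Charlier*} is automatic; the substantive task is to evaluate the remaining numerator as $\sum_{j=0}^{n-k}(-1)^{j}\binom{n}{k+j}\qbin{k+j}{j}$. I would carry this out along either of the two routes used elsewhere in the paper. The analytic route is to package the Motzkin generating function as a Flajolet continued fraction, convert it via a Heine- or $q$-Gauss-type transformation into a terminating basic hypergeometric $_2\phi_1$ series, and read off the coefficient of $x^{n}$. The bijective route, closer to Penaud's proof of Theorem~\ref{th:Touchard}, would be to exhibit a sign-reversing involution on decorated Motzkin paths that cancels all but the configurations naturally indexed by $j$ in the right-hand side, with $\binom{n}{k+j}$ counting a marked subset of $\{1,\dots,n\}$ and $\qbin{k+j}{j}$ recording the residual $q$-weighted data.

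The main obstacle is this second step: finding the correct $q$-hypergeometric transformation, or equivalently the correct Penaud-style decomposition, that collapses the Motzkin sum to a single finite alternating sum. A useful sanity check is that the right-hand side mixes an \emph{ordinary} binomial $\binom{n}{k+j}$ with a $q$-binomial $\qbin{k+j}{j}$: this signals that some $n-(k+j)$ positions ought to be chosen in a ``$q=1$'' fashion, whereas the remaining $q$-weighted piece involves only $\qbin{k+j}{j}$ and no $q^{\binom{j+1}{2}}$-factor. That structural feature is consistent with $\cro^{*}$ being more rigid than $\cro$ (the infinite arcs force certain crossings deterministically), and accordingly its generating function admits a shorter single-sum formula, in contrast to the double sum of Theorem~\ref{th:Charlier}. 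Once this simplification is in hand, coefficient extraction is a routine matter and the proof is complete.
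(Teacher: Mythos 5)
Your setup is sound and coincides with the paper's: encoding a partition weighted by $y^{|\pi|}q^{\cro^*(\pi)}$ as a weighted Motzkin path with $\lambda_h=yq^{h-1}[h]_q$ and $b_h=yq^h+[h]_q$ is exactly the ``histoire de Charlier-$*$'' of Section~\ref{sec:histoires}, and your structural guess that $\binom{n}{k+j}$ should count a marked subset while $\qbin{k+j}{j}$ carries the residual $q$-data is indeed what happens. But your argument stops precisely where the work begins: you yourself flag as the ``main obstacle'' the transformation or decomposition that collapses the Motzkin sum to the single alternating sum, and you do not supply it, so no proof has actually been given. The missing chain is concrete. One multiplies the moment by $(1-q)^n$, absorbing one factor $1-q$ into each step, so that $y$ becomes $y(1-q)$ and each $(1-q)[h]_q=1-q^h$ is split into the two admissible weights $1$ and $-q^h$; after reversing the paths this is the set $\mathcal{M}_n(-1,y(1-q),0,1;q)$ of Definition~\ref{defsMP}. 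Lemma~\ref{lem:decomposition} then factors each such path into a prefix in $\mathcal{P}_{n,m}(0,1)$ --- counted by $\binom{n}{m}$, since $c=0$ forbids down steps, which is where the ordinary binomial in \eqref{eq:Charlier*} comes from --- and a path in $\mathcal{M}^*_m(-1,y(1-q),0;q)$, whose generating function is the coefficient of $t^m$ in $K(-t,y(1-q)t,0;q)$ by Proposition~\ref{cfrac2}. Proposition~\ref{cfrac1} together with the expansion $1/(-t;q)_{i+1}=\sum_{j\ge0}\qbin{i+j}{j}(-t)^j$ gives Lemma~\ref{qchar*}, i.e.\ the coefficient $\sum_j\bigl(y(1-q)\bigr)^j(-1)^{m-j}\qbin{m}{j}$; extracting the coefficient of $y^k$ and setting $m=k+j$ yields the theorem.

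Two further cautions. First, your claim that ``pulling out the prefactor $(1-q)^{-(n-k)}$ is automatic'' hides the essential point: the rescaling is not cosmetic, because rewriting each $(1-q)[h]_q$ as the two-valued weight $1$ or $-q^h$ is exactly what creates the cancellation structure that any Penaud-style involution or hypergeometric evaluation must exploit; without that split there is nothing for the decomposition to act on. Second, the continued fraction you would need is not the $J$-fraction of the full moment generating function but the $T$-fraction-like series $K$ for the reduced paths $\mathcal{M}^*$, which only appears after the decomposition. As written, your proposal identifies the correct framework but defers the decisive evaluation, so it has a genuine gap rather than being an alternative proof.
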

This is not a new result: essentially, this formula was already known
to Gould from another definition (the link with crossings is more recent
as will appear below).

\begin{figure}
  \setlength{\unitlength}{20pt}
  \begin{picture}(7,7.5)(0,-4.5)
    \put(0,0.5){\line(1,0){7}}
    \put(0,0){\hbox{$1$}}\put(0,0.5){\circle*{0.1}}
    \put(1,0){\hbox{$2$}}\put(1,0.5){\circle*{0.1}}
    \put(2,0){\hbox{$3$}}\put(2,0.5){\circle*{0.1}}
    \put(3,0){\hbox{$4$}}\put(3,0.5){\circle*{0.1}}
    \put(4,0){\hbox{$5$}}\put(4,0.5){\circle*{0.1}}
    \put(5,0){\hbox{$6$}}\put(5,0.5){\circle*{0.1}}
    \put(6,0){\hbox{$7$}}\put(6,0.5){\circle*{0.1}}
    \put(7,0){\hbox{$8$}}\put(7,0.5){\circle*{0.1}}
    \qbezier(0,0.5)(2.0,2.5)(4,0.5)
    \qbezier(1,0.5)(3.0,2.5)(5,0.5)
    \qbezier(2,0.5)(2.5,1.0)(3,0.5)
    \qbezier(4,0.5)(5.5,2.5)(7,0.5)
    \qbezier(3,0.5)(5.5,2.5)(8,2.5)
    \qbezier(5,0.5)(6.5,2.0)(8,2.0)
    \qbezier(6,0.5)(7.0,1.5)(8,1.5)
    \qbezier(7,0.5)(7.5,1.0)(8,1.0)
    \put(-0.5,-4){\line(1,0){8}}
    \put(-0.5,-4){\circle*{0.1}}  \put(-0.5,-4){\line(1,1){1}}  \put(-0.2,-3.4){$y$}
    \put(0.5,-3){\circle*{0.1}}   \put(0.5,-3){\line(1,1){1}}   \put(0.8,-2.4){$y$}
    \put(1.5,-2){\circle*{0.1}}   \put(1.5,-2){\line(1,1){1}}   \put(1.8,-1.4){$y$}
    \put(2.5,-1){\circle*{0.1}}   \put(2.5,-1){\line(1,-1){1}}  \put(3.0,-1.4){$q^2$}
    \put(3.5,-2){\circle*{0.1}}   \put(3.5,-2){\line(1,0){1}}   \put(3.8,-1.8){$q$}
    \put(4.5,-2){\circle*{0.1}}   \put(4.5,-2){\line(1,-1){1}}  \put(5.0,-2.4){$q^2$}
    \put(5.5,-3){\circle*{0.1}}   \put(5.5,-3){\line(1,0){1}}   \put(5.8,-2.8){$yq$}
    \put(6.5,-3){\circle*{0.1}}   \put(6.5,-3){\line(1,-1){1}}  \put(7.0,-3.4){$1$}
    \put(7.5,-4){\circle*{0.1}}   
  \end{picture}
  \caption{A set partition of $\{1,\dots,8\}$ into $4$ blocks with $6$
    crossings and the associated ``histoire de Charlier-$*$''.}
  \label{fig:setpartitionER}
\end{figure}
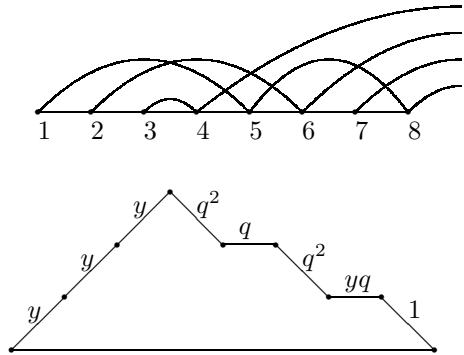

Finally, to depict a permutation $\sigma$, we connect the number $i$
with $\sigma(i)$ with an arc \emph{above} the line, if
$i\leq\sigma(i)$, otherwise with an arc \emph{below} the line, as
done in Figure~\ref{fig:permutation}.  The notion of crossing in a
permutation was introduced by Corteel~\cite{Cor07}, and is slightly
less straightforward: a pair of numbers
$(i,k)$ constitutes a \Dfn{crossing in a permutation}, if
$i<k\leq\sigma(i)<\sigma(k)$ or $\sigma(i)<\sigma(k)<i<k$:
\begin{center}
{\setlength{\unitlength}{1mm}
\begin{picture}(29,14)(-30,-7)
\put(-30,0){\line(1,0){28}}
\put(-28,0){\circle*{1}}\put(-28,0){\makebox(0,-5)[c]{\small $i$}}
\put(-20,0){\circle*{1}}\put(-20,0){\makebox(0,-5)[c]{\small $k$}}
\put(-12,0){\circle*{1}}\put(-12,0){\makebox(0,-5)[c]{\small $\pi(i)$}}
\put(-4,0){\circle*{1}}\put(-4,0){\makebox(0,-5)[c]{\small $\pi(k)$}}
\qbezier(-28,0)(-20,12)(-12,0) \qbezier(-20,0)(-12,12)(-4,0)
\end{picture}
\begin{picture}(34,14)(-35,-7)
\put(-35,-1){or}
\put(-30,0){\line(1,0){28}}
\put(-28,0){\circle*{1}}\put(-28,0){\makebox(0,-5)[c]{\small $i$}}
\put(-16,0){\circle*{1}}\put(-16,0){\makebox(0,-5)[c]{\small $k=\sigma(i)$}}
\put(-4,0){\circle*{1}}\put(-4,0){\makebox(0,-5)[c]{\small $\sigma(k)$}}
\qbezier(-28,0)(-22,12)(-16,0) \qbezier(-16,0)(-10,12)(-4,0)
\end{picture}
\begin{picture}(34,14)(-35,-7)
\put(-35,-1){or}
\put(-30,0){\line(1,0){28}}
\put(-28,0){\circle*{1}}\put(-28,5){\makebox(0,-5)[c]{\small $i$}}
\put(-20,0){\circle*{1}}\put(-20,5){\makebox(0,-5)[c]{\small $k$}}
\put(-12,0){\circle*{1}}\put(-12,5){\makebox(0,-5)[c]{\small $\sigma(i)$}}
\put(-4,0){\circle*{1}}\put(-4,5){\makebox(0,-5)[c]{\small $\sigma(k)$}}
\qbezier(-28,0)(-20,-12)(-12,0) \qbezier(-20,0)(-12,-12)(-4,0)
\put(0,-1){.}
\end{picture}}
\end{center}

Denoting the set of permutations of $\{1,\dots, n\}$ by
$\mathfrak{S}_n$, and the number of weak exceedances, {\it i.e.}
numbers $i$ with $\sigma(i)\geq i$, of a permutation $\sigma$ by
$\wex(\sigma)$, we have:
\begin{theo}[Josuat-Verg\`es~\cite{MJV}, Corteel, Josuat-Verg\`es,
  Prellberg, Rubey~\cite{CJPR09}]
  \label{th:Laguerre}
  \begin{equation}\label{eq:Laguerre}
    \begin{split}
      &\sum_{\sigma\in\mathfrak{S}_n}
      y^{\wex(\sigma)} q^{\cro(\sigma)} \\
      &= \frac 1{(1-q)^n} \sum\limits_{k=0}^n (-1)^k 
      \left(\sum\limits_{j=0}^{n-k} y^j
        \Big( \tbinom{n}{j}\tbinom{n}{j+k} -
        \tbinom{n}{j-1}\tbinom{n}{j+k+1}\Big) \right) 
      \left(\sum\limits_{i=0}^k y^iq^{i(k+1-i)}  \right).
    \end{split}
  \end{equation}
\end{theo}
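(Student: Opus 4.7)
The plan is to follow the general strategy advertised in the abstract: translate the left-hand side into the generating function of a family of weighted Motzkin paths, apply a Penaud-type bijective decomposition to extract the prefactor $(1-q)^{-n}$, and then evaluate the residual sum by a basic-hypergeometric manipulation (or, equivalently, by a functional equation).

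First I would use the Foata-Zeilberger bijection to encode $\sigma\in\mathfrak{S}_n$ as an ``histoire de Laguerre'', that is, a Motzkin path of length $n$ whose steps at height $h$ are weighted by monomials in $y$ and $q$ built from $[h]_q$ and $[h+1]_q$, with $y$ marking the contribution of weak exceedances and $q$ that of crossings. Under this encoding the left-hand side of \eqref{eq:Laguerre} becomes the $n$-th moment of the associated $q$-Laguerre orthogonal polynomial family. I would then expand each occurrence of $[h]_q=(1-q^h)/(1-q)$, pulling out a global $(1-q)^{-n}$, so that the sum is a signed weighted sum over Motzkin paths carrying, at each height $h$, an optional ``mark'' contributing a factor $-q^h$. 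This is precisely the setup in which Penaud's construction operates in the matching case; the generalization I would build is a sign-reversing involution that acts locally around each marked height and cancels all but a small residual family. The surviving paths, I expect, split into a lattice-path pair—whose signed enumeration yields the binomial difference $\binom{n}{j}\binom{n}{j+k}-\binom{n}{j-1}\binom{n}{j+k+1}$ by a Lindström-Gessel-Viennot argument—and a ``defect'' at a single height whose generating function is exactly $\sum_{i=0}^{k} y^i q^{i(k+1-i)}$, the symmetry $i\mapsto k+1-i$ of the exponent reflecting the symmetry of the defect around its apex.

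The main obstacle is carrying the $y$-statistic through the involution while simultaneously recognizing the residual piece as the second inner sum. In the Touchard-Riordan setting only one statistic is present and the surviving structure is Gaussian in $q$; here up, down, and level steps each affect $\wex$ differently, so the involution must be engineered to preserve the $y$-weight jointly with the sign. If the direct bijective route proves too delicate, two fallbacks are available: one can pass to the Jacobi continued fraction for the moment generating series, identify it with a ratio of $_2\phi_1$ or $_3\phi_2$ series via a standard $q$-transformation, and extract coefficients; or, as the abstract suggests, derive a functional equation for the generating function in an auxiliary variable and verify that the proposed right-hand side satisfies it, reducing \eqref{eq:Laguerre} to a finite $q$-identity provable by induction on $n$ or by coefficient extraction in $y$.
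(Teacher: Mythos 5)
Your plan is essentially the paper's proof: encode permutations as histoires de Laguerre, pull out $(1-q)^{-n}$ so the paths fall into the model $\mathcal{M}_n(-1,-yq,y,1+y;q)$, apply the generalized Penaud decomposition into a Motzkin prefix (counted via Lindstr\"om--Gessel--Viennot, yielding the binomial-difference inner sum) and a residual path family, whose generating function $\sum_{i=0}^k y^i q^{i(k+1-i)}$ the paper obtains exactly by your stated fallback --- a continued fraction identified with a ${}_2\phi_1$ and coefficient extraction (or a functional equation). The only minor discrepancy is that the paper's decomposition is a weight-preserving bijection factoring each path rather than a sign-reversing involution (cancellation enters only in interpreting the continued fraction, and a bijective evaluation of the residual sum is carried out only in the set-partition case), but this does not change the substance of the argument.
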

This theorem recently found a rather different proof by the first
author~\cite{MJV}.  In the present article we provide an alternative,
using a bijective decomposition of weighted Motzkin paths that gives
a natural interpretation for the two inner sums.

The rest of this article is organised as follows.  In Section
\ref{sec:orth-poly}, we present some background material concerning
the combinatorial theory of orthogonal polynomials.  In
Section~\ref{sec:penaud-decomp}, we describe the decomposition of
weighted Motzkin paths mentioned above, in full generality.  Each
Motzkin path will be decomposed into a Motzkin prefix and another
Motzkin path satisfying certain additional conditions.  In
Section~\ref{sec:prefixes}, we enumerate Motzkin prefixes, and in
Section~\ref{sec:paths2} the other set of paths appearing in the
decomposition are enumerated.

There are three appendices.  In the first appendix we give an
alternative point of view of the decomposition presented in
Section~\ref{sec:penaud-decomp}, using inverse relations.  In the
second appendix, we give a bijective proof of the formula for the
generating function of the paths appearing in the decomposition in
the case of set-partitions, using a sign-reversing involution.  It is
thus possible to give a fully bijective proof of
Theorem~\ref{th:Charlier}, analogous to Penaud's proof of the
Touchard-Riordan formula.  Finally, in the last appendix we sketch a
proof showing that one cannot expect closed forms for Motzkin
prefixes with weights different from those considered in
Section~\ref{sec:prefixes}.

\begin{figure}
  \setlength{\unitlength}{20pt}
  \begin{picture}(7,6)(0,-4.5)
    \put(0,0.5){\line(1,0){7}}
    \put(0,0){\hbox{$1$}}\put(0,0.5){\circle*{0.1}}
    \put(1,0){\hbox{$2$}}\put(1,0.5){\circle*{0.1}}
    \put(2,0){\hbox{$3$}}\put(2,0.5){\circle*{0.1}}
    \put(3,0){\hbox{$4$}}\put(3,0.5){\circle*{0.1}}
    \put(4,0){\hbox{$5$}}\put(4,0.5){\circle*{0.1}}
    \put(5,0){\hbox{$6$}}\put(5,0.5){\circle*{0.1}}
    \put(6,0){\hbox{$7$}}\put(6,0.5){\circle*{0.1}}
    \put(7,0){\hbox{$8$}}\put(7,0.5){\circle*{0.1}}
    \qbezier(0,0.5)(1.0,1.5)(2,0.5)
    \qbezier(1,0.5)(2.0,1.5)(3,0.5)
    \qbezier(2,0.5)(4.0,2.5)(6,0.5)
    \put(4,0.75){\circle{0.5}}
    \qbezier(6,0.5)(6.5,1.0)(7,0.5)
    \qbezier(7,0.5)(6.0,-1.5)(5,0.5)
    \qbezier(5,0.5)(3.0,-2.0)(1,0.5)
    \qbezier(3,0.5)(1.5,-1.5)(0,0.5)
    \put(-0.5,-4){\line(1,0){8}}  
    \put(-0.5,-4){\circle*{0.1}}  \put(-0.5,-4){\line(1,1){1}}  \put(-0.2,-3.4){$y$}
    \put(0.5,-3){\circle*{0.1}}   \put(0.5,-3){\line(1,1){1}}   \put(0.6,-2.4){$yq$}
    \put(1.5,-2){\circle*{0.1}}   \put(1.5,-2){\line(1,0){1}}   \put(1.8,-1.8){$yq^2$}
    \put(2.5,-2){\circle*{0.1}}   \put(2.5,-2){\line(1,-1){1}}  \put(3.0,-2.4){$q$}
    \put(3.5,-3){\circle*{0.1}}   \put(3.5,-3){\line(1,0){1}}   \put(3.8,-2.8){$y$}
    \put(4.5,-3){\circle*{0.1}}   \put(4.5,-3){\line(1,0){1}}   \put(4.8,-2.8){$1$}
    \put(5.5,-3){\circle*{0.1}}   \put(5.5,-3){\line(1,0){1}}   \put(5.8,-2.8){$yq$}
    \put(6.5,-3){\circle*{0.1}}   \put(6.5,-3){\line(1,-1){1}}  \put(7.0,-3.4){$1$}
    \put(7.5,-4){\circle*{0.1}}   
  \end{picture}
  \caption{A permutation of $\{1,\dots,8\}$ with $5$ weak exceedances and $5$
    crossings and the associated ``histoire de Laguerre''.}
  \label{fig:permutation}
\end{figure}
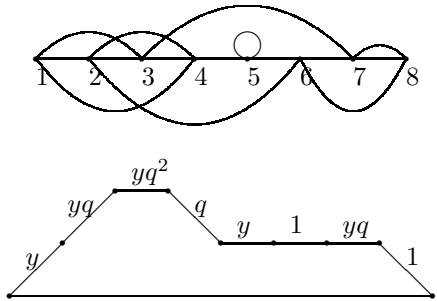

\section{Orthogonal Polynomials, moments and histoires}
\label{sec:orth-poly}

Motzkin paths are at the heart of the \emph{combinatorial theory of
  orthogonal polynomials}, as developed by
Flajolet~\cite{Fla82} and Viennot~\cite{Vie84}.  This theory
tells us, that the moments of any family of orthogonal polynomials
are given by a certain weighted count of Motzkin paths.

More precisely, by Favard's theorem, any monic sequence of orthogonal
polynomials $(P_n)_{n\geq0}$ satisfies a three term recurrence of the form
$$ xP_n(x) = P_{n+1}(x)+ b_n P_n(x) + \lambda_n P_{n-1}(x),$$
where $b_n$ and $\lambda_n$ do not depend on $x$.
Given this recurrence, the $n$\textsuperscript{th} moment $\mu^P_n$
of $P$ can be expressed as the weighted sum of Motzkin paths of
length $n$, that is, paths taking up ($\nearrow$), down ($\searrow$)
and level ($\rightarrow$) steps, starting and ending at height $0$,
and not going below this height, where a horizontal step at height
$h$ has weight $b_h$ and a down step starting at height $h$ has
weight $\lambda_h$.

\subsection{Histoires}
\label{sec:histoires}
Three basic examples of families of orthogonal polynomials are given
by (rescalings of) the Hermite, Charlier and Laguerre polynomials,
where the moments count matchings ($b_n=0$, $\lambda_n=n$),
set partitions ($b_n=1+n$, $\lambda_n=n$) and permutations
($b_n=2n+1$, $\lambda_n=n^2$) respectively.  It turns out that the
Hermite, Charlier and Laguerre polynomials indeed have beautiful
$q$-analogues such that the moments count the corresponding objects,
and $q$ marks the number of crossings.  We want to establish this
correspondence via ``histoires'':

\begin{defi}
  Consider a family of orthogonal polynomials with coefficients $b_n$
  and $\lambda_n$, and fix $a_n$ and $c_n$ such that
  $\lambda_n=a_{n-1} c_n$ for all $n$. Suppose that for every fixed
  $n$, the coefficients $a_n$, $b_n$ and $c_n$ are polynomials
  such that each monomial has coefficient 1 (as will appear shortly,
  this is general enough in our context).

  We then call a weighted Motzkin path \Dfn{histoire}, when the
  weight of an up step $\nearrow$ (respectively a level step
  $\rightarrow$ or a a down step $\searrow$) starting at level $h$ is
  one of the monomials appearing in $a_h$ (respectively $b_h$ or
  $c_h$).
\end{defi}

We want to consider four different families of ``histoires'',
corresponding to $q$-analogues of the Hermite, Charlier and Laguerre
polynomials.

\begin{prop}
  There are weight-preserving bijections between 
  \begin{itemize}
  \item matchings $M$ with weight $q^{\cro(M)}$, and ``histoires
    de Hermite''\ defined by $b_n=0$, $a_n=1$ and $c_n=[n]_q$,
  \item set partitions $\pi$ with weight $y^{|\pi|}q^{\cro(\pi)}$,
    and ``histoires de Charlier''\ defined by $b_n=y+[n]_q$,
    $a_n=y$ and $c_n=[n]_q$,
  \item set partitions $\pi$ with weight
    $y^{|\pi|}q^{\cro^*(\pi)}$, and ``histoires de Charlier-$*$''\
    defined by $b_n=yq^n+[n]_q$, $a_n=yq^n$ and $c_n=[n]_q$, and
  \item permutations $\sigma$ with weight
    $y^{\wex(\sigma)}q^{\cro(\sigma)}$, and ``histoires de
    Laguerre''\ defined by $b_n=yq^n+[n]_q$, $a_n=yq^n$ and
    $c_n=[n]_q$.
  \end{itemize}
\end{prop}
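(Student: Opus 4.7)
The plan is to build each bijection by scanning the elements $1,2,\dots,n$ of the ground set in order and, at each stage, emitting one labelled Motzkin step whose label is a monomial of $a_h$, $b_h$ or $c_h$. The key invariant is that after processing $i$, the current height $h$ equals the number of strands currently \emph{active} at position $i$: arcs still open (matchings), blocks whose minimum has already been seen but whose maximum has not (set partitions), or analogously defined threads for permutations. Classifying each $i$ by its local role at position $i$ determines whether we emit an up, level or down step, while the crossings $i$ creates with the active strands determine the power of $q$.

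For matchings (Hermite), each point is an opener (minimum of its arc) or a closer (maximum). Openers give up steps of weight $1 = a_h$. When $i$ closes an arc, the newly created crossings are in bijection with the active arcs opened strictly later than its partner; choosing which arc to close therefore contributes $q^k$ for some $k\in\{0,\dots,h-1\}$, and summing over $k$ reproduces $c_h=[h]_q$. For set partitions (Charlier), points are openers, closers, transitionals or singletons; openers and closers behave as for matchings with an extra factor $y$ on openers to record the new block, transitionals contribute level steps of weight $q^k$, and singletons contribute level steps of weight $y$, reproducing $a_h=y$, $c_h=[h]_q$ and $b_h=y+[h]_q$. The Charlier-$*$ case is identical, except that the Ehrenborg--Readdy infinite rays are also counted among the active strands: opening or extending a block at height $h$ automatically crosses all $h$ such rays, which is exactly the shift $a_h\mapsto yq^h$ and $b_h\mapsto yq^h+[h]_q$.

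For the Laguerre case, classify each $i$ by the two inequalities $i\leq\sigma(i)$ and $\sigma^{-1}(i)\leq i$: the four outcomes correspond to openers, closers and two types of level points, the ``weakly exceeding'' ones (those with $i\leq\sigma(i)$) being precisely those counted by $\wex(\sigma)$ and hence those receiving a $y$-weight. Corteel's definition of $\cro(\sigma)$ then decomposes along the path: an opener or upper-type level point at height $h$ crosses all $h$ currently active strands from above, producing the factor $q^h$ in $a_h=yq^h$ and in the upper summand of $b_h$, while a closer or lower-type level point contributes $q^k$ for some $k\in\{0,\dots,h-1\}$ through the choice of which active strand it interacts with. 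The inverse map in all four cases maintains the ordered list of active strands and uses each step's label to decide which strand to open, close, or traverse, making bijectivity immediate. The main technical obstacle is the Laguerre case, where one must verify that Corteel's pictorial definition of permutation crossings (with its subcases depending on whether arcs lie above or below the line) agrees term by term with the $q$-exponents collected along the histoire.
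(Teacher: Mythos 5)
Your overall plan—scan $1,\dots,n$ left to right, let the height record the active strands, and attribute crossings to steps—is exactly the paper's route (Viennot-style histoires, Kasraoui--Zeng for set partitions, Foata--Zeilberger for permutations), and the Hermite and Charlier cases are fine as you state them. For Charlier-$*$, your local claim that ``opening a block at height $h$ automatically crosses all $h$ rays'' is not literally true arc by arc (for a nested pair it is the \emph{new} block's own ray that crosses the active block's arc, not the new arc crossing an active ray), but each block active at the opening does account for exactly one extra crossing of $\cro^*$, so the weights $a_h=yq^h$, $b_h=yq^h+[h]_q$ are correct and your argument is repairable; the paper instead attributes ray-crossings to closers and singletons and then shifts factors along matched up/down steps.

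The genuine gap is the Laguerre case, which you flag and defer. The assertion that an opener (or weak-exceedance level point) at height $h$ contributes exactly $q^h$ is false: without the infinite rays, nested upper arcs create no crossings in Corteel's statistic, so the exponent attached to an opener at height $h$ is not forced to be $h$ but ranges over $0,\dots,h$. Concretely, $\sigma=4321$ has $\wex(\sigma)=2$ and $\cro(\sigma)=0$, yet its path is $\nearrow\nearrow\searrow\searrow$ with up steps at heights $0$ and $1$, so with up-step weight $yq^h$ every histoire of this shape has weight at least $y^2q$; moreover the four permutations of this shape carry total weight $y^2(1+q)^2$, while only two histoires with the weights you state exist, so no bijection (weight-preserving or not) is possible. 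The correct Laguerre weights are $a_h=y[h+1]_q$, $b_h=y[h+1]_q+[h]_q$, $c_h=[h]_q$ — the fourth bullet of the proposition as printed merely duplicates the Charlier-$*$ line, as one sees from Figure~\ref{fig:permutation} (a fixed point at height $1$ with weight $y$), from the specialisation $a=-1$, $b=-yq$, $c=y$, $d=1+y$ in Section~\ref{sec:specialising}, and from the recurrence coefficient $\lambda_n=y[n]_q^2$. The variable exponent $q^0,\dots,q^h$ on up steps is governed precisely by the arcs drawn \emph{below} the line, which your sketch discards for openers: the paper's proof reads those lower arcs as a second set partition, traversed from right to left, and uses it to assign the up-step weights (the level steps likewise split into fixed points, upper transients and lower transients). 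Without an argument of this kind, the ``term-by-term'' verification you postpone cannot succeed, because the identity you would need to verify is false for the weights as you (and the printed statement) give them.
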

These bijections are straightforward modifications of classical
bijections used by Viennot~\cite{Vie84}.  We detail them here for
convenience, but also because of their beauty\dots\ Examples can be
found in Figures~\ref{fig:matching}--\ref{fig:permutation}.
\begin{proof}
  The bijection connecting matchings and ``histoires de Hermite'',
  such that crossings are recorded in the exponent of $q$, goes as
  follows: we traverse the matching, depicted in the standard way,
  from left to right, while we build up the Motzkin path step by
  step, also from left to right.  For every arc connecting $i$ and
  $j$ with $i<j$, we call $i$ an \Dfn{opener} and $j$ a \Dfn{closer}.
  When we have traversed the matching up to and including number
  $\ell$, we call the openers $i\leq\ell$ with corresponding closers
  $j<\ell$ \Dfn{active}.  Openers are translated into up steps with
  weight $1$.  Accordingly, when we encounter a closers $\ell$ it
  becomes a down step with weight $q^k$, where $k$ is the number of
  active openers between $\ell$ and the opener corresponding to
  $\ell$.  It is a enjoyable exercise to see that this is indeed a
  bijection, and that a matching with $k$ crossings corresponds to a
  Dyck path of weight $q^k$.

  The bijection between set partitions and ``histoires de Charlier'',
  due to Anisse Kasraoui and Jiang Zeng~\cite{KaZe06}, is very
  similar: in addition to openers and closers, which are the
  non-maximal and non-minimal elements of the blocks of the set
  partition, we now also have \Dfn{singletons}, which are neither
  openers nor closers.  Elements that are openers and closers at the
  same time are called \Dfn{transients}.  Non-transient openers are
  translated into up steps with weight $y$, and singletons are
  translated into level steps with weight $y$.  Non-transient closers
  $\ell$ are translated into down steps both with weight $q^k$, where
  $k$ is the number of active openers between $\ell$ and the opener
  corresponding to $\ell$.  Finally, transient closers $\ell$ become
  level-steps with weight $q^k$, with $k$ as before.

  To obtain a ``histoire de Charlier-$*$'' of a set partition, using
  the modified definition of crossings, we only have to multiply the
  weights of steps corresponding to closers and singletons by $q^k$,
  where $k$ is the number of crossings of the infinite arc with other
  arcs.

  It remains to describe the bijection between permutations and
  ``histoires de Laguerre'', due to Dominique Foata and Doron
  Zeilberger, which is usually done in a different way than in what
  follows, however.  To obtain the Motzkin path itself, we ignore all
  the arcs below the line and also the loops corresponding to fixed
  points.  What remains can be interpreted as a set partition, and
  thus determines a Motzkin path.  Moreover, the weights of the down
  steps are computed as in the case of set partitions, except that
  the weight of each of those steps needs to be multiplied by $y$.
  The weights of the level steps that correspond to transients of the
  set partition are also computed as before, but are then multiplied
  by $yq$.  Level steps that correspond to fixed points of the
  permutation get weight $y$.  The weights of the remaining steps are
  computed by deleting all arcs above the line, and again
  interpreting what remains as a set partition.  However, this set
  partition has to be traversed from right to left, and weights are
  accordingly put onto the up steps of the Motzkin path.  Later, it
  will be more convenient to move the factor $y$ that appears in the
  weight of all the down steps onto the weight of the corresponding
  up steps, see Figure~\ref{fig:permutation} for an example.
\end{proof}

\subsection{Particular classes of orthogonal polynomials}
\label{sec:ortho}
In this section we relate the families of orthogonal polynomials
introduced via their parameters $b_n$ and $\lambda_n$ in
Section~\ref{sec:histoires} to classical families.  We follow the
Askey-Wilson scheme~\cite{KoSw98} for their definition.

The \Dfn{continuous $q$-Hermite polynomials} $H_n=H_n(x|q)$ can be
defined~\cite[Section~3.26]{KoSw98} by the recurrence relation
\begin{equation*}
  2 x H_n = H_{n+1} + (1-q^n) H_{n-1},
\end{equation*}
with $H_0=1$.

\begin{theo}[Ismail, Stanton and Viennot~\cite{ISV87}]
  Define rescaled continuous $q$-Hermite polynomials $\tilde
  H_n=\tilde H_n(x|q)$ as
  \begin{equation}
    \tilde H_n(x|q)=(1-q)^{-n/2}H_n(x\tfrac{\sqrt{1-q}}{2}|q).
  \end{equation}
  They satisfy the recurrence relation
  \begin{equation}
    x \tilde H_n = \tilde H_{n+1} + [n]_q \tilde H_{n-1},
  \end{equation}
  and their even moments are given by
  \begin{equation}\label{eq:muTouchard}
    \mu^{\tilde H}_{2n} = \sum_{M\in\Set M_{2n}} q^{\cro(M)}.
  \end{equation}
  The odd moments are all zero.
\end{theo}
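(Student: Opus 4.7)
The plan is to verify the recurrence directly from the definition of $\tilde H_n$, and then obtain the moment formula as an immediate consequence of the Flajolet--Viennot theory (recalled just above the subsection) combined with the bijection between matchings and ``histoires de Hermite'' established in the previous Proposition.

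First I would prove the recurrence. Starting from
\begin{equation*}
  2y\, H_n(y|q) = H_{n+1}(y|q) + (1-q^n) H_{n-1}(y|q),
\end{equation*}
I substitute $y = x\tfrac{\sqrt{1-q}}{2}$, so that $2y = x\sqrt{1-q}$, and then use $H_n(y|q) = (1-q)^{n/2}\tilde H_n(x|q)$. The left-hand side becomes $x\sqrt{1-q}\,(1-q)^{n/2}\tilde H_n(x|q)$, while the right-hand side becomes $(1-q)^{(n+1)/2}\tilde H_{n+1}(x|q) + (1-q^n)(1-q)^{(n-1)/2}\tilde H_{n-1}(x|q)$. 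Dividing throughout by $(1-q)^{(n+1)/2}$ and recognising $(1-q^n)/(1-q) = [n]_q$ gives the claimed three-term recurrence for $\tilde H_n$.

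Next I would apply the Flajolet--Viennot interpretation of moments recalled at the start of Section~\ref{sec:orth-poly}. With $b_n = 0$ and $\lambda_n = [n]_q$, the $N$-th moment $\mu^{\tilde H}_N$ is the generating polynomial of Motzkin paths of length $N$ starting and ending at height $0$, with no level steps (since $b_h = 0$ for every $h$) and with each down step from height $h$ weighted by $[h]_q$. Such paths are Dyck paths, which forces $N$ to be even, so the odd moments vanish. For $N = 2n$, factoring the weight $[h]_q = 1 + q + \cdots + q^{h-1}$ of each down step into a choice of one of the $h$ monomials $q^k$ identifies these weighted Dyck paths exactly with the ``histoires de Hermite'' of the previous Proposition (with $a_n = 1$, $b_n = 0$, $c_n = [n]_q$, so that $\lambda_n = a_{n-1}c_n = [n]_q$).

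Finally, the bijection in that Proposition between matchings $M \in \Set M_{2n}$ weighted by $q^{\cro(M)}$ and ``histoires de Hermite'' of length $2n$ yields \eqref{eq:muTouchard} immediately. There is no real obstacle here: the recurrence is a one-line rescaling, and the moment identity is obtained by quoting the general moment formula and the bijection already proved. The only slight care needed is in the rescaling of $x$ and the powers of $(1-q)$, to make sure that the constants $\lambda_n$ indeed come out to $[n]_q$ rather than $(1-q^n)$ or $[n]_q/(1-q)$.
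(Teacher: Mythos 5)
Your proposal is correct and takes essentially the approach the paper intends: the theorem is quoted from Ismail, Stanton and Viennot, but its ingredients are precisely the moment--Motzkin-path interpretation recalled at the start of Section~\ref{sec:orth-poly} and the bijection between matchings and ``histoires de Hermite'' proved in the Proposition, which is exactly what you assemble. Your rescaling computation correctly turns $(1-q^n)$ into $[n]_q$, and the observation that $b_h=0$ forces Dyck paths (hence vanishing odd moments) is the right way to conclude.
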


The \Dfn{Al-Salam-Chihara} polynomials $Q_n=Q_n(x;a,b|q)$ can be
defined~\cite[Section~3.8]{KoSw98} by the recurrence relation
\begin{equation*}
  2 x Q_n = Q_{n+1} + (a + b)q^n Q_n + (1-q^n)(1-abq^{n-1})Q_{n-1},
\end{equation*}
with $Q_0=1$.
We consider two different specialisations of these polynomials.  The
first was introduced by Kim, Stanton and 
Zeng~\cite{KSZ06}, and in their Proposition~5 they also gave a formula for the
moments.  However, the formula that follows from our
Theorem~\ref{th:Charlier} appears to be much simpler.
\begin{theo}[Kim, Stanton, Zeng \cite{KSZ06}]
  Define $q$-Charlier polynomials $\tilde C_n=\tilde C_n(x;y|q)$ as
  \begin{equation}
    \tilde C_n(x;y|q)=\left(\tfrac{y}{1-q}\right)^{n/2}
    Q_n\left(\sqrt{\tfrac{1-q}{4y}}\left(x-y-\tfrac{1}{1-q}\right);
      \tfrac{-1}{\sqrt{y(1-q)}}, 0 \;\big\vert\; q\right).
  \end{equation}
  They satisfy the recurrence relation
  \begin{equation}
    x \tilde C_n = \tilde C_{n+1} + ( y + [n]_q) \tilde C_n + y [n]_q
    \tilde C_{n-1}
  \end{equation}
  and their moments are given by
  \begin{equation}
    \mu^{\tilde C}_{n} = \sum_{\pi\in\Pi_n} y^{|\pi|}q^{\cro(\pi)}.
  \end{equation}
\end{theo}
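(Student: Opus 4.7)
The plan is to prove the two assertions -- the three term recurrence and the moment formula -- in turn, with the first being essentially a substitution and the second an immediate application of the combinatorial framework developed in Section~\ref{sec:orth-poly}.

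First I would check the recurrence directly. Starting from the Al-Salam-Chihara recurrence
\begin{equation*}
  2xQ_n = Q_{n+1} + (a+b)q^n Q_n + (1-q^n)(1-abq^{n-1})Q_{n-1}
\end{equation*}
I specialise to $a=-1/\sqrt{y(1-q)}$, $b=0$, so that $ab=0$ and $a+b = -1/\sqrt{y(1-q)}$, and I substitute $x \mapsto \sqrt{(1-q)/(4y)}\bigl(x-y-1/(1-q)\bigr)$. Multiplying through by $\bigl(y/(1-q)\bigr)^{(n+1)/2}$ turns each $\bigl(y/(1-q)\bigr)^{(n\pm 1)/2}Q_{n\pm 1}$ into $\tilde C_{n\pm 1}$. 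Tracking the factors of $\sqrt{(1-q)/y}$ on the left-hand side collapses the prefactor $2\sqrt{(1-q)/(4y)}$ to~$1$, and on the right-hand side the coefficient of $\tilde C_n$ becomes $-q^n/(1-q)$ while the coefficient of $\tilde C_{n-1}$ becomes $y(1-q^n)/(1-q)=y[n]_q$. Absorbing the constant $y+1/(1-q)$ from the shift into the $\tilde C_n$ term and using $1/(1-q)-q^n/(1-q)=[n]_q$, one obtains exactly
\begin{equation*}
  x\tilde C_n = \tilde C_{n+1}+(y+[n]_q)\tilde C_n+y[n]_q\tilde C_{n-1}.
\end{equation*}
This is the only computational step; it is routine and I expect no real obstacle here, other than keeping the book-keeping of square roots straight.

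For the moment formula I appeal to the combinatorial theory of orthogonal polynomials recalled in Section~\ref{sec:orth-poly}. The recurrence just established has $b_n=y+[n]_q$ and $\lambda_n=y[n]_q$, so by the Flajolet--Viennot theorem $\mu^{\tilde C}_n$ equals the sum over Motzkin paths of length $n$, weighted by $b_h$ on each level step at height $h$ and $\lambda_h$ on each down step starting at height $h$. Factoring $\lambda_n=a_{n-1}c_n$ with $a_n=y$ and $c_n=[n]_q$ and splitting $b_n$ monomial-wise into $y+1+q+\dots+q^{n-1}$, these Motzkin paths are precisely the \emph{histoires de Charlier} of Definition~\ref{defi} above. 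The proposition describing the Kasraoui--Zeng bijection then identifies the weighted sum of such histoires with $\sum_{\pi\in\Pi_n}y^{|\pi|}q^{\cro(\pi)}$, completing the proof.

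In short, the only non-formal part is the rescaling calculation, and the moment statement is then an automatic consequence of the already established bijection between set partitions weighted by blocks and crossings and the histoires de Charlier. I do not anticipate any subtlety beyond the careful substitution.
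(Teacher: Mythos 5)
Your proof is correct: the specialisation $a=-1/\sqrt{y(1-q)}$, $b=0$ of the Al-Salam-Chihara recurrence together with the stated rescaling does give $x\tilde C_n=\tilde C_{n+1}+(y+[n]_q)\tilde C_n+y[n]_q\tilde C_{n-1}$, and the moment formula then follows from the Flajolet--Viennot interpretation of moments as weighted Motzkin paths combined with the weight-preserving bijection to histoires de Charlier established in Section~\ref{sec:histoires}. The paper itself does not reprove this theorem (it simply cites Kim--Stanton--Zeng), but your argument is exactly the verification its own framework suggests, so there is nothing to add.
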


The other specialisation was introduced by Kasraoui, Stanton and
Zeng~\cite{KSZ08}, however, without providing a formula for the
moments (these are actually a particular case of octabasic
$q$-Laguerre polynomials from \cite{SS}).

\begin{theo}[Kasraoui, Stanton, Zeng \cite{KSZ08}]
  Define $q$-Laguerre polynomials $\tilde L_n=\tilde L_n(x;y|q)$ as
  \begin{equation}
    \tilde L_n(x; y|q) = \left(\tfrac{\sqrt{y}}{q-1}\right)^n
    Q_n\left(\tfrac{(q-1)x+y+1}{2\sqrt{y}}; \tfrac{1}{\sqrt{y}}, \sqrt{y} q \;\big\vert\; q\right).
  \end{equation}
  They satisfy the recurrence relation:
  \begin{equation*}
    x \tilde L_n = \tilde L_{n+1} + ([n]_q + y[n+1]_q) \tilde L_n + y
    [n]_q^2 \tilde L_{n-1}.
  \end{equation*}
  and their moments are given by
  \begin{align}\label{eq:muLaguerre}
    \mu^{\tilde L}_{n} = \sum_{\sigma\in\mathfrak{S}_n}
    y^{\wex(\sigma)} q^{\cro(\sigma)}.
  \end{align}
\end{theo}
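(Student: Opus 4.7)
The theorem comprises two substantive claims, the three-term recurrence and the moment formula; I would handle them in that order.

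For the recurrence, my plan is to substitute the defining rescaling into the Al-Salam-Chihara recurrence recalled above and simplify. The crucial observation is that with $a=1/\sqrt{y}$, $b=\sqrt{y}q$ one has $ab=q$, so $(1-q^n)(1-abq^{n-1})$ collapses to the perfect square $(1-q^n)^2$; this is exactly the source of the factor $[n]_q^2$ in the theorem. Writing $u=\frac{(q-1)x+y+1}{2\sqrt{y}}$ so that $Q_n(u)=\bigl(\frac{q-1}{\sqrt{y}}\bigr)^n\tilde L_n(x;y|q)$, one substitutes into the specialised AS--C recurrence, cancels the common factor $\bigl(\frac{q-1}{\sqrt{y}}\bigr)^n$, and multiplies through by $\frac{\sqrt{y}}{q-1}$. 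The left-hand side becomes $\bigl(x+\frac{y+1}{q-1}\bigr)\tilde L_n$; transferring the constant shift $\frac{y+1}{q-1}$ to the right and combining it with the level coefficient $\frac{q^n+yq^{n+1}}{q-1}$ gives $\frac{q^n-1}{q-1}+y\frac{q^{n+1}-1}{q-1}=[n]_q+y[n+1]_q$ by the identity $[k]_q=\frac{q^k-1}{q-1}$. The down coefficient simplifies to $y\frac{(1-q^n)^2}{(1-q)^2}=y[n]_q^2$, giving exactly the stated recurrence.

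Given the recurrence, the moment identity follows almost for free. By Favard's theorem and the combinatorial theory of orthogonal polynomials of Flajolet and Viennot recalled at the beginning of this section, $\mu^{\tilde L}_n$ is the weighted count of Motzkin paths of length $n$ with level-step weight $[h]_q+y[h+1]_q$ at height $h$ and down-step weight $y[h]_q^2$ from height $h$. Factoring $y[h]_q^2=y[h]_q\cdot[h]_q$ and expanding each polynomial weight as a sum of monomials realises these weighted paths as histoires de Laguerre in the sense of the preceding Proposition, which then translates the count bijectively into $\sum_{\sigma\in\mathfrak S_n}y^{\wex(\sigma)}q^{\cro(\sigma)}$.

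The only place where something can go astray is the algebraic simplification in the first paragraph; once the cancellation $ab=q$ is noticed, what remains is careful bookkeeping of the argument shift $\frac{y+1}{q-1}$ that gets absorbed into the level coefficient. After that, the moment identity is an immediate consequence of two results already at hand, namely the Flajolet--Viennot expansion of moments and the histoire bijection for permutations.
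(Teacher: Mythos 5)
Your proposal is correct, but there is nothing in the paper to compare it against: the paper states this theorem as background, attributing both the recurrence and the moment formula to Kasraoui, Stanton and Zeng \cite{KSZ08}, and gives no proof of its own. Your two-step verification is precisely the route the surrounding text implicitly relies on. The algebra in your first paragraph checks out: with $a=1/\sqrt{y}$, $b=\sqrt{y}q$ one has $ab=q$, so the Al-Salam-Chihara coefficient $(1-q^n)(1-abq^{n-1})$ becomes $(1-q^n)^2$, and after dividing by $\bigl(\tfrac{q-1}{\sqrt{y}}\bigr)^n$ and multiplying by $\tfrac{\sqrt{y}}{q-1}$ the shift $\tfrac{y+1}{q-1}$ combines with $\tfrac{(1+yq)q^n}{q-1}$ to give $[n]_q+y[n+1]_q$, while the last coefficient becomes $y\tfrac{(1-q^n)^2}{(1-q)^2}=y[n]_q^2$. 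The moment identity then follows, as you say, from the Flajolet--Viennot path expansion together with the bijection of the Proposition in Section~\ref{sec:histoires}.

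One small but worthwhile remark: your factorisation $\lambda_n=y[n]_q^2=(y[n]_q)\cdot[n]_q$ amounts to taking $a_n=y[n+1]_q$, $b_n=[n]_q+y[n+1]_q$, $c_n=[n]_q$ for the histoires de Laguerre. This is what Figure~\ref{fig:permutation} and the proof of the Proposition actually use (the factor $y$ from each down step being moved to the matching up step), whereas the displayed item in the Proposition misprints the Laguerre data as $b_n=yq^n+[n]_q$, $a_n=yq^n$, which would give $\lambda_n=yq^{n-1}[n]_q$ and duplicates the Charlier-$*$ case. So your reading is the right one, and it is exactly the one forced by the recurrence you derived; you might just flag explicitly that you are using the corrected weights rather than the ones as printed.
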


The \Dfn{Al-Salam-Carlitz I} polynomials $U^{(a)}_n(x|q)$ can be
defined~\cite[Section~3.24]{KoSw98} by the recurrence relation
\begin{equation}
  xU_{n}^{(a)}(x|q) = U_{n+1}^{(a)}(x|q) + (a+1)q^nU_{n}^{(a)}(x|q) 
  -  q^{n-1} a(1-q^n)U_{n-1}^{(a)}(x|q),
\end{equation}
with $U_{0}^{(a)}(x|q)=1$.

\begin{theo}[de Médicis, Stanton, White~\cite{MSW}]
  Define modified $q$-Charlier polynomials $\tilde C_n^*=\tilde
  C_n^*(x;y|q) $ as:
  \begin{equation}
    C^*_n(x;y|q) =  y^n U_n^{\left(\frac{-1}{y(1-q)}\right)}
    \left(   \tfrac xy - \tfrac 1{y(1-q)} | q  \right).
  \end{equation}
  They satisfy the recurrence relation
  \begin{equation}
    x\tilde C^*_n = \tilde C^*_{n+1} 
    +  ( yq^n + \left[n\right]_q ) \tilde C^*_n 
    + y\left[n\right]_q q^{n-1} \tilde C^*_{n-1},
  \end{equation}
  and their moments are given by
  \begin{align}\label{eq:muCharlier*}
    \mu^{\tilde C^*}_{n} = \sum_{\pi\in\Pi_n }y^{|\pi|}
    q^{\cro^*(\pi)}.
  \end{align}
\end{theo}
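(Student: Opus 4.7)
The statement has two independent pieces: (i) the three-term recurrence for $\tilde C^*_n$, and (ii) the combinatorial formula for its moments. I will handle them in turn, and neither step requires new ideas beyond what the paper has already recalled.

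For the recurrence, the plan is to take the Al-Salam--Carlitz~I recurrence given just before the theorem, substitute $x \mapsto z := \tfrac{x}{y} - \tfrac{1}{y(1-q)}$ and multiply through by $y^{n+1}$, using $\tilde C^*_k(x;y|q) = y^k U_k^{(a)}(z|q)$ with $a = \tfrac{-1}{y(1-q)}$. The factors of $y$ balance so that $U_{k}^{(a)}(z|q) = \tilde C^*_k/y^k$ turns each term into one involving $\tilde C^*$. The coefficient of $\tilde C^*_n$ becomes $zy + (a+1)q^n y = (x - \tfrac{1}{1-q}) + y q^n + y q^n a$, and using $y a = \tfrac{-1}{1-q}$ the constant terms combine as $\tfrac{1}{1-q} - \tfrac{q^n}{1-q} = [n]_q$, giving the middle coefficient $y q^n + [n]_q$. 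Similarly the coefficient of $\tilde C^*_{n-1}$ becomes $-q^{n-1} a (1-q^n) y^2 = q^{n-1} y \cdot \tfrac{1-q^n}{1-q} = y q^{n-1}[n]_q$, matching the claim.

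For the moment formula, the strategy is to combine the Flajolet--Viennot theory (recalled at the start of Section~\ref{sec:orth-poly}) with the bijection of the earlier Proposition. From the recurrence just derived, $\mu^{\tilde C^*}_n$ equals the generating polynomial of Motzkin paths of length $n$ in which each level step at height $h$ is weighted by $b_h = y q^h + [h]_q$ and each down step starting at height $h$ by $\lambda_h = y [h]_q q^{h-1}$. The key observation is the factorisation $\lambda_h = a_{h-1} c_h$ with $a_h = y q^h$ and $c_h = [h]_q$, so that distributing the weight of each down step between itself and the matching up step recovers exactly the triple $(a_h, b_h, c_h)$ defining ``histoires de Charlier-$*$''. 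Expanding each level weight $b_h$ as the sum of its monomials $y q^h$ and $1, q, \dots, q^{h-1}$ then presents $\mu^{\tilde C^*}_n$ as a sum over histoires de Charlier-$*$ of length $n$.

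Invoking the bijection of the Proposition, this sum equals $\sum_{\pi \in \Pi_n} y^{|\pi|} q^{\cro^*(\pi)}$, which is exactly \eqref{eq:muCharlier*}. I do not expect any real obstacle: the recurrence derivation is a mechanical substitution, and the moment identification is a direct application of the combinatorial theory of orthogonal polynomials together with a bijection already established. The only place to be careful is bookkeeping in the shift between $\lambda_h$ and $a_{h-1}c_h$, so that the $q$-exponents ($q^{n-1}$ versus $q^n$) land on the correct step.
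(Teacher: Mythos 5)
Your proposal is correct, and both halves check out: the substitution $z=\tfrac{x}{y}-\tfrac{1}{y(1-q)}$, $a=\tfrac{-1}{y(1-q)}$ into the Al-Salam--Carlitz I recurrence does give the middle coefficient $(a+1)q^n y+\tfrac{1}{1-q}=yq^n+[n]_q$ and the lower coefficient $-q^{n-1}a(1-q^n)y^2=y[n]_q q^{n-1}$, and the factorisation $\lambda_h=a_{h-1}c_h$ with $a_h=yq^h$, $c_h=[h]_q$ correctly turns the Flajolet--Viennot Motzkin-path expression for $\mu^{\tilde C^*}_n$ into a sum over histoires de Charlier-$*$. The comparison with the paper is this: the paper does not actually prove this theorem, it cites de M\'edicis--Stanton--White and only remarks that their result was stated for a different statistic, both being equivalent to Carlitz's $q$-Stirling numbers $S[n,k]$ with $S[n,k]=S[n-1,k-1]+[k]_qS[n-1,k]$ and $\mu^{\tilde C^*}_n=\sum_k S[n,k]y^k$. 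Your route is instead self-contained within the paper's own framework: a mechanical verification of the recurrence plus the weight-preserving bijection of the Proposition in Section~\ref{sec:histoires}, which is exactly the mechanism the paper sets up for all four families; what it buys is a direct proof avoiding the detour through the other statistic and the $q$-Stirling recurrence, at the price of leaning on the Proposition, whose proof (the Kasraoui--Zeng bijection modified by the extra factor $q^k$ counting crossings with the infinite arcs) the paper itself only sketches. The one place where your write-up is slightly loose is the phrase describing ``the coefficient of $\tilde C^*_n$'' as $zy+(a+1)q^ny$ before the constant $\tfrac{1}{1-q}$ has been moved across; the bookkeeping you then perform is nevertheless correct, so this is a matter of presentation, not a gap.
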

The result from \cite{MSW} was actually stated with another statistic, but both
correspond to Carlitz' $q$-analogue of the Stirling numbers of the second kind $S[n,k]$,
which are such that $S[n,k]=S[n-1,k-1]+[k]_qS[n-1,k]$, and 
\[\mu^{\tilde C^*}_{n} = \sum_{k=1}^{n} S[n,k]y^k.\]

\section{Penaud's decomposition}
\label{sec:penaud-decomp}

Let us first briefly recall Penaud's strategy to prove the
Touchard-Riordan formula for the moments of the rescaled continuous
$q$-Hermite polynomials $\tilde H_n$.  As already indicated in the
introduction, his starting point was their combinatorial
interpretation in terms of weighted Dyck paths, down steps starting
at level $h\geq1$ having weight $[h]_q$, up steps having weight $1$.

As the total number of down steps in these paths is $n$, we may take
out a factor $(1-q)^{-n}$, and instead consider paths with down steps
having weight $1-q^h$, or, equivalently, consider paths with down
steps having weight $1$ \emph{or} $-q^h$.

The next step is to (bijectively) decompose each path into two
objects: the first is a left factor of an unweighted Dyck path of
length $n$ and final height $n-2k\geq0$, for some $k$.  The second
object, in some sense the remainder, is a weighted Dyck path of
length $k$ with the same possibilities for the weights as in the
original path, except that peaks (consisting of an up step
immediately followed by a down step) of weight $1$ are not allowed.
This decomposition will be generalised in
Lemma~\ref{lem:decomposition} below.

The left factors are straightforward to count, the result being the
ballot numbers $\binom{2n}{n-k}-\binom{2n}{n-k-1}$.  For the
remainders, Penaud presented a bijective proof that the sum
of their weights is given by $(-1)^k q^{\binom{k+1}{2}}$.  Summing
over all $k$ we obtain the Touchard-Riordan
formula~\eqref{eq:Touchard}.

\subsection{The general setting}
\label{sec:general-setting}
\begin{defi}\label{defsMP} 
  Let $\mathcal{M}_n(a,b,c,d;q)$ be the set of weighted Motzkin paths
  of length $n$, such that the weight of
\begin{itemize}
\item an up step $\nearrow$ starting at level $h$ is either $1$ or
  $-q^{h+1}$,
\item of a level step $\rightarrow$ starting at level $h$ is either
  $d$ or $(a+b)q^h$,
\item a down step $\searrow$ starting at level $h$ is either $c$ or
  $-abq^{h-1}$.
\end{itemize}
Furthermore, let
$\mathcal{M}^*_n(a,b,c;q)\subset\mathcal{M}_n(a,b,c,d;q)$ be the
subset of paths that do not contain any
\begin{itemize}
\item level step $\rightarrow$ of weight $d$,
\item peak $\nearrow\searrow$ such that the up step has weight $1$
  and the down step has weight $c$.
\end{itemize}
Finally, let $\mathcal{P}_{n,k}(c,d)$ be the set of left factors of
Motzkin paths of length $n$ and final height $k$, such that the
weight of
\begin{itemize}
\item an up step $\nearrow$ is $1$,
\item a level step $\rightarrow$ is $d$,
\item a down step $\searrow$ is $c$.
\end{itemize}
\end{defi}

With these definitions, the decomposition used by Penaud can
be generalised in a natural way as follows:
\begin{lem}
  \label{lem:decomposition}
  There is a bijection $\Delta$ between $\mathcal{M}_n(a,b,c,d;q)$
  and the disjoint union of the sets
  $\mathcal{P}_{n,k}(c,d)\times\mathcal{M}^*_k(a,b,c;q)$ for
  $k\in\{0,\dots,n\}$.
\end{lem}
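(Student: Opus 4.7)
I follow the spirit of Penaud's construction and define $\Delta$ by iteratively peeling from $P \in \mathcal{M}_n(a,b,c,d;q)$ the configurations forbidden in $\mathcal{M}^*$: every level step of weight $d$, and every peak consisting of an up step of weight $1$ immediately followed (in the currently reduced path) by a down step of weight $c$. Each peeled $d$-level at original position $i$ is recorded as a level step (weight $d$) at position $i$ of $\pi$; each peeled $(1,c)$-peak at original positions $i<j$ is recorded as an up step (weight $1$) at position $i$ and a down step (weight $c$) at position $j$ of $\pi$; and the remaining path is taken to be $P^*$, with the positions of its surviving steps recorded as up steps (weight $1$) in $\pi$.

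The verification rests on the observation that every peelable configuration has zero net height change, so peeling preserves the heights of all surviving steps. This yields three facts at once: $\pi$ is a left factor of length $n$, since its height at any position equals the number of peels opened but not yet closed plus the number of surviving ups already placed, both non-negative, and its final height is exactly $k=|P^*|$; the weights of $P^*$ still fit Definition~\ref{defsMP}, because the heights of $P^*$-steps in $P^*$ equal their original heights in $P$; and the peeling is confluent, for two peelable patterns cannot share a position (a peak occupies two specific adjacent positions, a $d$-level a single position) and hence commute. The weight identity $w(P)=w(\pi)\cdot w(P^*)$ is then immediate bookkeeping: each peeled $(1,c)$-peak contributes $1\cdot c$ to $\pi$ and nothing to $P^*$, each peeled $d$-level contributes $d$ to $\pi$, and each surviving step contributes its full weight to $P^*$ and a factor $1$ to $\pi$.

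The delicate step, and the one I would treat most carefully, is the construction of the inverse map. Given $(\pi,P^*)\in\mathcal{P}_{n,k}(c,d)\times\mathcal{M}^*_k(a,b,c;q)$, perform the standard Catalan matching of the up and down steps of $\pi$ (ignoring level steps): each matched up-down pair at positions $(i,j)$ is reinstated as a $(1,c)$-peak of $P$ at positions $(i,j)$, each level step of $\pi$ is reinstated as a $d$-level of $P$, and the $k$ unmatched up steps of $\pi$, in left-to-right order, receive the successive steps of $P^*$ with their $P^*$-weights. The fact that the Catalan matching of $\pi$ coincides with the pairing induced by peeling innermost $(1,c)$-peaks in the forward direction shows that this is a genuine inverse, while the zero-net-height property guarantees that the reinserted $P^*$-steps land at $P$-heights equal to their $P^*$-heights, so their inherited weights continue to satisfy the constraints of Definition~\ref{defsMP} and the reconstructed $P$ is a legitimate element of $\mathcal{M}_n(a,b,c,d;q)$.
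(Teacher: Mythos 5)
Your proof is correct and is essentially the paper's own argument: your iterated peeling removes exactly the maximal Motzkin-path factors with weights $1$, $d$, $c$ that the paper extracts in one step, your $\pi$ and $P^*$ are its $H_1$ and $H_2$, and your Catalan-matching inverse is its unique factorisation of $H_1$ into unmatched up steps and Motzkin factors. The only real difference is presentational: the one-shot maximal factorisation sidesteps the confluence and matching-coincidence checks that your local rewriting formulation requires (and which you correctly, if tersely, justify).
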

\begin{proof} 
  Let $H$ be a path in $\mathcal{M}_n(a,b,c,d;q)$.  Consider the
  maximal factors $f_1,\dots,f_j$ of $H$ that are Motzkin paths and
  have up steps of weight $1$, level steps of weight $d$ and down
  steps of weight $c$.  We can thus factorise $H$ as
  $h_0f_1h_1f_2\dots f_jh_j$.

  Since this factorisation is uniquely determined, we can define
  $\Delta(H)=(H_1,H_2)$ as follows:
  $$
  H_1 = (\nearrow)^{|h_0|} f_1 (\nearrow)^{|h_1|} f_2 \dots f_j (\nearrow)^{|h_j|}
  \qquad\hbox{ and } \qquad
  H_2 = h_0\dots h_j.
  $$

  Thus, $H_1$ is obtained from $H$ by replacing each step in the
  $h_i$ by an up step $\nearrow$, and $H_2$ is obtained from $H$ by
  deleting the factors $f_i$.  Since the $f_i$ are Motzkin paths, the
  weight of $H$ is just the product of the weights of $H_1$ and
  $H_2$.  Furthermore, it is clear that $H_1$ is a path in
  $\mathcal{P}_{n,k}(c,d)$ with final height
  $k=|h_0|+|h_1|+\dots+|h_j|$.  We observe that the $h_i$ cannot
  contain a level step $\rightarrow$ of weight $d$ or a peak
  $\nearrow\searrow$ such that the up step has weight $1$ and the
  down step has weight $c$, because then the factorisation of $H$
  would not have been complete.  Thus $H_2$ is a path in
  $\mathcal{M}^*_k(a,b,c;q)$.

  It remains to verify that $\Delta$ is indeed a bijection.  To do
  so, we describe the inverse map: let
  $(H_1,H_2)\in\mathcal{P}_{n,k}(c,d)\times \mathcal{M}^*_k(a,b,c;q)$
  for some $k\in\{0,\dots,n\}$.  Thus, there exists a unique
  factorisation 
  $$
  H_1=(\nearrow)^{u_0} f_1 (\nearrow)^{u_1} f_2 \dots f_j
  (\nearrow)^{u_j}
  $$
  such that the $f_i$ are Motzkin paths and $k=\sum_{\ell=0}^j
  u_\ell$.  Write $H_2$ as $h_0\dots h_j$, where the factor $h_\ell$
  has length $u_\ell$.  Then $\Delta^{-1}(H_1,H_2) =
  h_0f_1h_1f_2\dots f_jh_j$ is the preimage of $(H_1,H_2)$.
\end{proof}

\subsection{Specialising to matchings, set partitions and
  permutations}\label{sec:specialising}
As remarked in the introduction of this section, we begin by
multiplying the weighted sum of all Motzkin paths by an appropriate
power of $1-q$.  In the case of \emph{matchings} of $\{1,\dots,2n\}$, we
are in fact considering Dyck paths of length $2n$ where a down step
starting at height $h$ has weight $[h]_q$.  Multiplying the weighted
sum with $(1-q)^n$, or, equivalently, multiplying the weight of each
down step by $1-q$, we thus obtain Dyck paths having down steps
starting at height $h$ weighted by $1-q^h$, which fits well into the
model introduced in Definition~\ref{defsMP}: namely, the set
$\mathcal{M}_n(a,b,c,d;q)$ with $a=0$, $b=0$, $c=1$ and $d=0$
consists precisely of these paths -- except that they are all
reversed.

In the case of \emph{set partitions} of $\{1,\dots,n\}$,
multiplying the weighted sum by $(1-q)^n$ and reversing all paths we
see that we need to enumerate the set $\mathcal{M}_n(a,b,c,d;q)$ with
$a=0$, $b=-1$, $c=y(1-q)$ and $d=1+y(1-q)$.  When using the
\emph{modified} definition of crossings in \emph{set partitions}, we
obtain surprisingly different parameters, namely $a=-1$, $b=y(1-q)$,
$c=0$ and $d=1$.  Finally, the case of \emph{permutations} of
$\{1,\dots,n\}$ is covered by enumerating the set
$\mathcal{M}_n(a,b,1,d;q)$ with $a=-1$, $b=-yq$, $c=y$ and $d=1+y$.


\section{Counting $\mathcal{P}_{n,k}(c,d)$}
\label{sec:prefixes}

In general, formulas for the cardinality of $\mathcal{P}_{n,k}(c,d)$
can be found easily using Lagrange inversion \cite{Sta}. Consider the
generating function $P_k = \sum_n \size{\mathcal{P}_{n,k}(c,d)} t^n$,
we want to determine the coefficient of $t^{n+1}$ in $tP_k=(t
P_0)^{k+1}$.  Observing the relationship
$$
t P_0 = t\left( 1 + d (t P_0) + c (t P_0)^2\right)
$$
we find that %
$[t^n](t P_0)^k = \frac{k}{n}[z^{n-k}](1 + d z + c z^2)^n$, and thus

\begin{equation}
  \label{eq:trinomial}
  \size{\mathcal{P}_{n,k}(c,d)}=
  \frac{k+1}{n+1}\sum_{l=0}^{n-k}\binom{n+1}{l}\binom{l}{2l-n+k}d^{2l-n+k}c^{n-k-l}.
\end{equation}

To count matchings, set partitions or permutations according to
crossings (modified or not), the only sets of parameters that we need
to consider are $(c,d)=(1,0)$, $(c,d)=(1,2)$ and $(c,d)=(0,1)$.
Curiously, these are precisely the values for which
Equation~\eqref{eq:trinomial} allows a closed form, {\it i.e.} can be
written as a linear combination of hypergeometric terms.  A (sketch
of a) justification of this fact is given in
Appendix~\ref{sec:trinomial}.

\subsection{Matchings}
For matchings, we have $(c,d)=(1,0)$ and we obtain the ballot
numbers:
\begin{lem}\label{dyck_pre} 
  The cardinality of $\mathcal{P}_{n,n-2k}(1,0)$, {\it i.e.} the number of
  left factors of Dyck paths of length $n$ and final height $n-2k\geq
  0$ is
\[
  \binom nk - \binom n{k-1}.
\]
\end{lem}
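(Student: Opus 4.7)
The plan is to invoke the reflection principle (Bertrand's ballot argument), which gives the cleanest derivation. A left factor of a Dyck path of length $n$ ending at height $n-2k \geq 0$ is simply a sequence of $n$ steps $\nearrow, \searrow$ with exactly $n-k$ up steps and $k$ down steps, subject to the constraint of never descending below height~$0$. Discarding the non-negativity constraint, the number of such sequences is $\binom{n}{k}$.

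Next I would count the ``bad'' paths, namely those that touch height $-1$ at some point. For each such path, reflect its prefix up to the first visit to height $-1$ across the line $y=-1$; this sends it to a lattice path from $(0,-2)$ to $(n, n-2k)$. That new path has $n-k-1$ up steps and $k+1$ down steps, so there are $\binom{n}{k-1}$ of them, and the reflection is a bijection between bad paths and such unrestricted paths. Subtracting, the number of good paths is $\binom{n}{k}-\binom{n}{k-1}$, as claimed.

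As a sanity check, I would verify consistency with the general formula~\eqref{eq:trinomial}: specialising to $c=1$, $d=0$, only the summand with $2l-n+k=0$ survives (here ``$k$'' in~\eqref{eq:trinomial} is the final height $n-2k$ in our setting), giving $\frac{n-2k+1}{n+1}\binom{n+1}{k}$, which one readily simplifies to $\binom{n}{k}-\binom{n}{k-1}$ by pulling out a common factor. There is no serious obstacle here — the only thing to be careful about is keeping the two roles of the symbol ``$k$'' (the number of down steps versus the final height $n-2k$) straight.
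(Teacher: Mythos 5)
Your proof is correct, but it takes a different route from the paper. The paper does not give a dedicated argument for this lemma at all: it derives the general count of $\mathcal{P}_{n,k}(c,d)$ by Lagrange inversion, namely Equation~\eqref{eq:trinomial}, and Lemma~\ref{dyck_pre} is then just the specialisation $(c,d)=(1,0)$, where the sum collapses to the single term $l=k$ (with the final height $n-2k$ playing the role of the ``$k$'' in \eqref{eq:trinomial}), giving $\frac{n-2k+1}{n+1}\binom{n+1}{k}=\binom{n}{k}-\binom{n}{k-1}$ -- exactly the simplification you carry out in your sanity check. Your main argument instead proves the ballot-number formula directly by the reflection principle: the step counts $n-k$ up and $k$ down are right, the reflection across $y=-1$ correctly sends bad paths to unrestricted paths from height $-2$, which have $k-1$ down steps, hence $\binom{n}{k-1}$ of them, and the subtraction is valid. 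What your approach buys is a short, self-contained bijective proof independent of the generating-function machinery; what the paper's approach buys is uniformity, since the same Lagrange-inversion formula also handles the cases $(c,d)=(y,1+y)$ and $(0,1)$ needed later. You were also right to flag, and correctly handle, the clash between the two roles of the symbol $k$ (number of down steps versus final height).
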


\subsection{Set partitions and permutations}
For set partitions and permutations, we have $(c,d)=(y,1+y)$ and
obtain the following:
\begin{lem}\label{motz_pre}
  The generating function for $\mathcal{P}_{n,k}(y,1+y)$ is:
  \begin{equation}\label{motz_pre_eq}
    \sum\limits_{j=0}^{n-k} 
    \left( \binom{n}{j}\binom{n}{j+k} -
      \binom{n}{j-1}\binom{n}{j+k+1}\right) y^j.
  \end{equation}
\end{lem}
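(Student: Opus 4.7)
My plan for Lemma~\ref{motz_pre} is a Lindstr\"om--Gessel--Viennot argument, showing that the coefficient of $y^j$ in the generating function for $\mathcal{P}_{n,k}(y,1+y)$ is exactly $\binom{n}{j}\binom{n}{j+k}-\binom{n}{j-1}\binom{n}{j+k+1}$. The first step is to expand $(1+y)^{L}$ combinatorially: writing $(1+y)^{L}y^{D}=\sum_{M}y^{D+|M|}$, with the sum running over subsets $M$ of the level steps of a given Motzkin prefix $P$ (so $L$ denotes the number of its level steps and $D$ that of its down steps), replaces the weighted sum by a count of pairs $(P,M)$. The coefficient of $y^{j}$ is thus the number of such pairs with $D+|M|=j$ ``drop-like events''.

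Next, I would encode each pair $(P,M)$ by two subsets of $[n]$: let $S$ be the set of positions of the down steps together with the marked level steps, and $T$ that of the up steps together with the marked level steps. Then $|S|=j$; since the number of up steps minus the number of down steps is $k$, we also have $|T|=j+k$. The four step types (down, up, marked level, unmarked level) match the four cases $i\in S\setminus T$, $T\setminus S$, $S\cap T$, or neither, and the non-negativity of the Motzkin prefix becomes the ``ballot condition'' $|T\cap[i]|\geq|S\cap[i]|$ for every $i\in[n]$.

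Viewing $S$ and $T$ as $\{N,E\}$-lattice paths $p_{S},p_{T}$ of length $n$, starting at $(0,0)$ and ending at $(n-j,j)$ and $(n-j-k,j+k)$ respectively, the ballot condition is precisely that $p_{T}$ stays weakly above $p_{S}$. Shifting $p_{T}$ by $(-1,+1)$ turns ``weakly above'' into ``shares no vertex with $p_{S}$'', because the two paths live on the same antidiagonals and the difference $y_{S}-y_{T}$ starts at $0$ and changes by $0$ or $\pm1$ per step, so a first violation of $y_{S}\leq y_{T}$ can only occur when $y_{S}=y_{T}+1$, i.e.\ on the shifted meeting. Applying LGV to these non-intersecting pairs then yields
\begin{equation*}
\det\begin{pmatrix}\binom{n}{j}&\binom{n}{j+k+1}\\ \binom{n}{j-1}&\binom{n}{j+k}\end{pmatrix}=\binom{n}{j}\binom{n}{j+k}-\binom{n}{j-1}\binom{n}{j+k+1}.
\end{equation*}

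The main point that needs care is the LGV hypothesis that in the swapped matching $A_{1}\to B_{2}$, $A_{2}\to B_{1}$ every pair of NE-paths must intersect; this holds because the $y$-order of the two starting points $(0,0)$, $(-1,1)$ matches that of the two endpoints, so swapping forces one path to cross the other. If carrying out the bijective bookkeeping becomes cumbersome, a fully elementary alternative is to verify that both sides of the identity satisfy the first-order recurrence $F_{n,k}=F_{n-1,k-1}+(1+y)F_{n-1,k}+yF_{n-1,k+1}$ with initial conditions $F_{0,0}=1$ and $F_{0,k}=0$ for $k\neq 0$, which for the left-hand side follows from the last-step decomposition of a Motzkin prefix and for the right-hand side follows by iterated Pascal's rule.
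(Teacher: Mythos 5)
Your proposal is correct and follows essentially the same route as the paper: after splitting the weight $1+y$ on level steps, both arguments encode each weighted Motzkin prefix as a pair of non-intersecting NE-lattice paths (yours via the subsets $S$, $T$ with a diagonal shift, the paper via a step-by-step translation table with starting points $(0,1)$ and $(1,0)$) and then apply the Lindstr\"om--Gessel--Viennot Lemma to obtain the $2\times2$ determinant $\binom{n}{j}\binom{n}{j+k}-\binom{n}{j-1}\binom{n}{j+k+1}$. The differences are only cosmetic reparametrisations of the same bijection, so your argument matches the paper's proof in substance.
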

\begin{proof}
  The elements of $\mathcal{P}_{n,k}(y,1+y)$ have weight $1+y$ on
  each level step.  However, it is again more convenient to pretend
  that there are two different kinds of level steps, with weight $1$
  and $y$ respectively.  Let $P$ be a left factor of a Motzkin path
  with weight $y^j$.  We then use the following step by step
  translation to transform it into a pair $(C_1,C_2)$ of
  non-intersecting paths taking north and east steps, starting at
  $(0,1)$ and $(1,0)$ respectively (see Figure~\ref{paths_r} for an
  example):
  \begin{center} 
    \begin{tabular}[h]{l|c|c}
      $i$\textsuperscript{th} step of $P$ &
      $i$\textsuperscript{th} step of $C_1$ &
      $i$\textsuperscript{th} step of $C_2$ \\\hline
      $\nearrow$                & $\uparrow$    & $\rightarrow$ \\
      $\rightarrow$, weight $1$ & $\uparrow$    & $\uparrow$ \\
      $\rightarrow$, weight $y$ & $\rightarrow$ & $\rightarrow$ \\
      $\searrow$, weight $y$    & $\rightarrow$ & $\uparrow$
    \end{tabular}
  \end{center}

  \begin{figure}[h!tp] \center \psset{unit=5mm}
  \begin{pspicture}(0,-1)(8,4)
  \psgrid[gridcolor=gray,griddots=4,subgriddiv=0,gridlabels=0](0,0)(8,4)
  \psline(0,0)(4,0)(5,1)(6,1)(7,1)(8,2)
  \rput(0.5,0.5){$1$}
  \rput(1.5,0.5){$y$}
  \rput(2.5,0.5){$y$}
  \rput(3.5,0.5){$1$}
  \rput(4.5,1.5){$1$}
  \rput(5.5,1.5){$y$}
  \rput(6.5,1.5){$1$}
  \rput(7.5,2.5){$1$}
  \end{pspicture} \psset{unit=3mm}
  \hspace{2.5cm}
  \begin{pspicture}(-2,-1)(9,7)
  \psline{->}(0,0)(7,0) \psline{->}(0,0)(0,7)
  \psline(0,0)(7,0)  \psline(0,0)(0,7)
  \psline(0,1)(7,1)  \psline(1,0)(1,7)
  \psline(0,2)(7,2)  \psline(2,0)(2,7)
  \psline(0,3)(7,3)  \psline(3,0)(3,7)
  \psline(0,4)(7,4)  \psline(4,0)(4,7)
  \psline(0,5)(7,5)  \psline(5,0)(5,7)
  \psline(0,6)(7,6)  \psline(6,0)(6,7)
  \psdots(1,0)(0,1)(3,6)(6,3)
  \psline[linewidth=0.8mm](1,0)(1,1)(3,1)(3,2)(5,2)(5,3)(6,3)
  \psline[linewidth=0.8mm](0,1)(0,2)(2,2)(2,4)(3,4)(3,6)
  \rput(-1,-1){\small 0}\rput(1,-1){\small 1}\rput(3,-1){\small$j$}\rput(8,-1){\small$j+k+1$}
  \rput(-1,1){\small 1}\rput(-3,3){\small$n-k-j$}\rput(-3,6){\small$n-j+1$}
  \end{pspicture}
  \mycaption{A bijection to count $\mathcal{P}_{n,k}(y,1+y)$.
  \label{paths_r}}
  \end{figure}

  The condition that a Motzkin path does not go below the $x$-axis
  translates into the fact that $C_1$ and $C_2$ are non-intersecting.
  Since $P$ has $j$ steps weighted by $y$, the path $C_1$ ends at
  $(j,n-j+1)$.  Since $P$ ends at height $k$, the number of up steps
  $\nearrow$ and the number of level steps $\rightarrow$ with weight
  $y$ add up to $j+k$, so $C_2$ ends at $(j+k+1, n-j-k)$.

  By the Lindstr\"om-Gessel-Viennot Lemma~\cite{GeVi85}, these pairs
  of non-intersecting paths can be counted by a 
  $2\times 2$-determinant, which gives precisely Formula~\eqref{motz_pre_eq}.
\end{proof}

For $y=1$ the sum in Equation~\eqref{motz_pre_eq} can be simplified
using Vandermonde's identity.  Thus, the number of left factors of
Motzkin paths of length $n$ and final height $k$, with weight $2$ on
every level step is
\begin{equation} 
    \binom{2n}{n-k} - \binom{2n}{n-k-2}.
\end{equation}

For $(c,d)=(0,1)$, that is, $y=0$, we obtain what we need to count
modified crossings in set partitions, namely the binomial coefficient
$\binom{n}{k}$.

\section{counting $\mathcal{M}^*_k(a,b,c;q)$}
\label{sec:paths2}

In this section we use a continued fraction to find the generating
function for the Motzkin paths in $\mathcal{M}^*_k(a,b,c;q)$
(these paths are described in Definition \ref{defsMP}). It
turns out that this continued fraction can be expressed as a basic
hypergeometric series, which allows us to compute the coefficients
corresponding to paths with given length.  Let $K(a,b,c;q)$ be
\begin{equation}
   \cfrac{1}
     {1 + c - (a+b)   - \cfrac{(c-ab)(1-q)}
     {1 + c - (a+b)q  - \cfrac{(c-abq)(1-q^2)}
     {1 + c - (a+b)q^2- \cfrac{(c-abq^2)(1-q^3)} {\ddots}  } }}.
\end{equation}

Let us first give a combinatorial interpretation of $K(at,bt,ct^2;q)$
in terms of weighted Motzkin paths.  This result is close to those
given by Roblet and Viennot~\cite{RoVi96}, who
developed a combinatorial theory of $T$-fractions. These are continued
fractions of the form $1/(1-a_0t-b_0t/(1-a_1t-b_1t/\dots))$, and they 
are generating functions of Dyck paths with some weights on the peaks.

\begin{prop}\label{cfrac2}
  The coefficient of $t^k$ in the expansion of $K(at,bt,ct^2;q)$ is
  the generating function of $\mathcal{M}^*_k(a,b,c;q)$.
\end{prop}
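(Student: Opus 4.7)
My plan is to derive a continued fraction expansion for the generating function
\[
M^*(t) = \sum_{k \geq 0} t^k \sum_{H \in \mathcal{M}^*_k(a,b,c;q)} w(H),
\]
using the classical first-return (arch) decomposition of Motzkin paths, and then to check that the result agrees with $K(at,bt,ct^2;q)$ term by term.

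For each $h \geq 0$, let $F_h(t)$ denote the generating function of Motzkin paths that start and end at height $h$, never descend below $h$, and carry the weights of $\mathcal{M}^*(a,b,c;q)$ assigned by absolute height (so a step starting at relative height $j$ above $h$ is weighted as it would be at absolute height $h+j$). Then $F_0(t) = M^*(t)$. I would then decompose a non-empty path starting at height $h$ according to its first step. Either it begins with a level step at height $h$, which contributes only the weight $(a+b)q^h$ (since in $\mathcal{M}^*$ the $d$-option is forbidden), followed by another such path; or it begins with an up step, followed by an excursion above $h$ returning to height $h+1$, followed by a down step back to $h$, and then another such path. The excursion contributes $F_{h+1}(t)$.

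Naively, the (up, excursion, down) block contributes $t^2(1-q^{h+1})(c-abq^h)\, F_{h+1}(t)$, since the up-weights sum to $1-q^{h+1}$ and the down-weights to $c-abq^h$. However, when the excursion is empty -- this is precisely the constant term $1$ of $F_{h+1}(t)$ -- the block collapses to a peak, and the forbidden $(1,c)$-configuration contributes $c t^2$, which must be removed. This gives the functional equation
\[
F_h(t) = 1 + (a+b)q^h\, t\, F_h(t) + \Bigl( t^2(1-q^{h+1})(c-abq^h)\, F_{h+1}(t) - c t^2 \Bigr) F_h(t),
\]
which solves to
\[
F_h(t) = \cfrac{1}{1 + ct^2 - (a+b)q^h\, t - t^2(1-q^{h+1})(c-abq^h)\, F_{h+1}(t)}.
\]

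Iterating this relation is legitimate as a formal power series in $t$, since each unfolding produces strictly higher powers of $t$; the resulting continued fraction for $F_0(t)$ coincides with $K(at,bt,ct^2;q)$ upon comparing denominators level by level, which proves the proposition. The main subtlety is the peak-exclusion bookkeeping: in the standard Jacobi--Rogers expansion the constant term at each level of the denominator is $1$, but the $(1,c)$-peak restriction adds exactly the correction term $+c t^2$ at each level, which is the non-standard feature visible in $K$ and the very reason for defining $\mathcal{M}^*$ as in Definition~\ref{defsMP}. Everything else is a direct application of the Flajolet--Viennot continued-fraction machinery.
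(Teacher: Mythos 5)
Your proof is correct, but it takes a genuinely different route from the paper. You work directly with the paths in $\mathcal{M}^*_k(a,b,c;q)$, apply the first-return decomposition level by level, and observe that excluding the $(1,c)$-peaks removes exactly the empty-excursion term $ct^2$ from each arch contribution $t^2(1-q^{h+1})(c-abq^h)F_{h+1}(t)$; solving the resulting functional equation reproduces the denominators $1+ct^2-(a+b)q^ht$ of $K(at,bt,ct^2;q)$, and the iteration is legitimate formally since each level enters with a factor $t^2$. The paper goes the other way around: it expands the continued fraction via Flajolet's theory into paths with an extra ``double-level'' step of weight $-c$ (accounting for the non-standard $+ct^2$ in the denominators, in the spirit of Roblet--Viennot $T$-fractions), and then cancels, by a sign-reversing involution exchanging the first double-level step with the first forbidden $(1,c)$-peak, everything except the paths of $\mathcal{M}^*_k(a,b,c;q)$. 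Your argument is more self-contained (no appeal to the four-step-type path model or to $T$-fraction theory), at the cost of a couple of boundary checks you leave implicit but which do hold: a forbidden peak can straddle the arch decomposition only when the excursion is empty, because a nonempty excursion above height $h+1$ can neither begin with a down step nor end with an up step, and the returning down step cannot start a peak with what follows. What the paper's involution buys, and your route does not, is its reuse in Appendix A, where the same cancellation identifies the generating function of the weighted Schr\"oder paths coming from the inverse relation with that of $\mathcal{M}^*_k(0,0,1;q)$.
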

\begin{proof} 
  The continued fraction $K(at,bt,ct^2;q)$ equals:
  \begin{equation}
    \cfrac{1}
    {1+ ct^2 - (a\hspace{-0.3mm}+\hspace{-0.3mm}b)t   - \cfrac{t^2(c - ab)(1-q)}
      {1+ ct^2 - (a\hspace{-0.3mm}+\hspace{-0.3mm}b)qt  - \cfrac{t^2(c - abq)(1-q^2)}
        {1+ ct^2 - (a\hspace{-0.3mm}+\hspace{-0.3mm}b)q^2t- 
          \cfrac{t^2(c\hspace{-0.3mm} -\hspace{-0.3mm} abq^2)
            (1\hspace{-0.3mm}-\hspace{-0.3mm}q^3)} {\ddots}  } }}.
  \end{equation}

  Using the ideas introduced by
  Flajolet~\cite[Theorem~1]{Fla82}, we thus obtain paths with four
  types of steps, denoted up $\nearrow$, down $\searrow$, level
  $\rightarrow$ and double-level $\longrightarrow$, the last type of
  step simply being twice as long as the usual level step.  Moreover,
  the weight of
  \begin{itemize}
  \item an up step $\nearrow$ starting at height $h$ is either $1$ or $-q^{h+1}$,
  \item a level step $\rightarrow$ starting at height $h$ is $(a+b)q^h$,
  \item a down step $\searrow$ starting at height $h$ is either $c$
    or $-abq^{h-1}$,
  \item a double-level step $\longrightarrow$ is $-c$.
  \end{itemize}
  To prove the statement, it suffices to construct a involution on
  the paths, such that
  \begin{itemize}
  \item its fixed points are precisely the elements of
    $\mathcal{M}^*_k(a,b,c;q)$, {\it i.e.} paths without double-level
    steps $\longrightarrow$ and without peaks $\nearrow\searrow$ such
    that the up step has weight $1$ and the down step has weight $c$,
  \item the weight of a path that is not fixed under the involution
    and the weight of its image add to zero.
  \end{itemize}
  Such an involution is easy to find: a path that is not in
  $\mathcal{M}^*_k(a,b,c;q)$, we look for the first occurrence of one
  of the two forbidden patterns, {\it i.e.} a double level step
  $\longrightarrow$ or a peak $\nearrow\searrow$ with steps weighted
  $1$ and $c$ respectively.  We then exchange one of the patterns for
  the other -- since the double level step $\longrightarrow$ has
  weight $-c$, the weights of the two paths add up to zero.
\end{proof}

As mentioned above, $K(at,bt,ct^2;q)$ can be expressed as a basic
hypergeometric series.  We use the usual notation for these series,
as for example in~\cite{GaRa90}.

\begin{prop} \label{cfrac1}
  For $A\neq 1$, $B\neq 0$, we have
  \renewcommand{\arraystretch}{1}
  \begin{equation} \label{cfrac_hyp}
    K(A,B,C;q) = 
    \frac{1}{1-A} \cdot {}_2\phi_1
    \left( \left. \begin{matrix} CB^{-1}q, q \\ Aq \end{matrix} \, \right| q,B \right). 
  \end{equation}
  For $A\neq 1$, $B=0$, we have
  \renewcommand{\arraystretch}{1}
  \begin{equation} \label{cfrac_hypb0}
    K(A,0,C;q) = 
    \frac{1}{1-A} \cdot {}_1\phi_1
    \left( \left. \begin{matrix} q \\ Aq \end{matrix} \, \right| q,Cq \right). 
  \end{equation}
\end{prop}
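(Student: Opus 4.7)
The plan is to prove \eqref{cfrac_hyp} by showing that the continued fraction $K(A,B,C;q)$ and the hypergeometric expression on the right-hand side satisfy a common three-term linear recursion in an auxiliary depth parameter, with compatible boundary data that select the same minimal solution.

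Writing $K_h(A,B,C;q)$ for the tail continued fraction starting at depth $h$, so that $K_0=K(A,B,C;q)$ and $K_h=1/(g_h-f_h K_{h+1})$ with $g_h=1+C-(A+B)q^h$ and $f_h=(C-ABq^h)(1-q^{h+1})$, I would set $K_h=D_{h+1}/D_h$ and turn this into the three-term recursion
\[
D_h = g_h D_{h+1} - f_h D_{h+2}.
\]
A direct reparametrisation $A\mapsto Aq$, $B\mapsto Bq$, $C\mapsto C$ matches the $g_h$ coefficients but breaks the $(1-q^{h+1})$ factor of $f_h$, so the CF does not admit a clean self-similar iteration; I will instead characterise $(D_h)$ as the minimal solution of the recursion in the sense of Pincherle's theorem.

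Based on the target identity I set $D_0=1-A$ and $D_1={}_2\phi_1(CB^{-1}q,q;Aq;q,B)$, and look for an ansatz for $D_h$ with $h\geq 2$ as a (scalar multiple of a) ${}_2\phi_1$ with parameters shifted by $q^{h-1}$. The correct form is to be calibrated by computing $D_2$ and $D_3$ from the recursion with the initial data and matching them against candidate hypergeometric expressions. The central step is then to verify the recursion for the ansatz. Upon the substitution $a=CB^{-1}q^h$, $b=q^h$, $c=Aq^h$, this reduces to a three-term contiguous relation among ${}_2\phi_1(a,b;c;q,B)$, ${}_2\phi_1(aq,bq;cq;q,B)$ and ${}_2\phi_1(aq^2,bq^2;cq^2;q,B)$; such relations are standard and can be located in \cite{GaRa90}, or checked directly by comparing coefficients of $B^n$ and reducing to a polynomial identity in $A$, $C$, $q$ and $q^n$.

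Once the recursion and initial conditions are matched, the equality $K=D_1/D_0$ holds provided the ansatz is the minimal solution. I would verify this by asymptotic analysis as $h\to\infty$: the ${}_2\phi_1$ factor in the ansatz tends to $1$ because its parameters degenerate to $0$, while a standard Perron-type analysis of the recursion shows that any independent solution grows unboundedly (essentially like $q^{-\binom{h}{2}}$ up to a tame factor), so the ansatz is indeed the minimal solution selected by the continued fraction. The degenerate formula \eqref{cfrac_hypb0} then follows by passing to the limit $B\to 0$ in \eqref{cfrac_hyp}, first rewriting the ${}_2\phi_1$ by a Heine transformation so that the apparent singularity at $B=0$ in the upper parameter $CB^{-1}q$ is removed; the ${}_2\phi_1$ collapses to the ${}_1\phi_1$ in the statement. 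The main obstacle is the absence of self-similarity: both the ansatz for $D_h$ and the matching ${}_2\phi_1$ contiguous relation must be identified before the algebraic verification can start, and once they are, the remaining work is routine manipulation of $q$-Pochhammer symbols.
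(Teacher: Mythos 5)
Your overall strategy --- writing the tails as $K_h=D_{h+1}/D_h$ for a solution of $f_hD_{h+2}-g_hD_{h+1}+D_h=0$ and identifying that solution with shifted ${}_2\phi_1$'s via Pincherle's theorem --- could in principle be completed, but as written it defers exactly the two steps that carry the mathematical content, and the claim used to close the minimality argument is false. First, the ansatz for $D_h$ is never produced: you leave it ``to be calibrated'', and the one concrete guess you make (shifting all three parameters, $a=CB^{-1}q^h$, $b=q^h$, $c=Aq^h$) is not the right family. What works is $D_h=(A;q)_h^{-1}\,{}_2\phi_1(CB^{-1}q,\,q^h;\,Aq^h;\,q,B)$, in which the first upper parameter stays \emph{fixed}; note $D_0=1$ precisely because of the upper parameter $1$, which is what makes the value of the fraction a single ${}_2\phi_1$ rather than a quotient. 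The three-term contiguous relation certifying that this $D_h$ solves the recurrence is the whole point of the proposition --- it is essentially the content of the Ismail--Libis identity \cite{IsLi89} that the paper invokes --- so waving at ``standard relations in \cite{GaRa90}'' without exhibiting and verifying one leaves the core of the proof missing. Second, the Pincherle step is not routine as claimed: since $g_h\to 1+C$ and $f_h\to C$, the recurrence is of Poincar\'e--Perron type with characteristic roots $1$ and $1/C$; solutions grow or decay geometrically, not like $q^{-\binom{h}{2}}$, and the candidate $D_h$ tends to a nonzero constant (by the $q$-binomial theorem, not to $1$), so whether it is the \emph{minimal} solution depends on the size of $|C|$. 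Hence the convergence/minimality argument needs an honest setting --- e.g.\ restricting the parameters, or working with formal power series in $t$ after the substitution $A=at$, $B=bt$, $C=ct^2$, where each partial numerator is divisible by $t^2$ and the $n$-th convergent agrees with the limit to order $t^{2n}$ --- and your $B\to0$ limit must be justified inside that same framework.

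For comparison, the paper sidesteps both difficulties by restoring self-similarity with an extra variable: it considers the two-variable fraction $M(z)$ whose tails are $M(qz)$, quotes the Ismail--Libis evaluation of $M(z)$ as a quotient of two contiguous series ${}_2\phi_1(A,B;Cq;q,qz)$ and ${}_2\phi_1(A,B;Cq;q,z)$, and then applies a Heine transformation so that the specialisation $z=1$ is legitimate and the denominator series collapses to $1$ (its upper parameter becomes $1$), giving \eqref{cfrac_hyp}; the case $B=0$ is handled the same way with a ${}_1\phi_1$. Your depth recurrence in $h$ is the discrete shadow of the functional equation relating $M(z)$ to $M(qz)$ that the paper also mentions, and the ansatz $M(z)=(1-Az)^{-1}H(qz)/H(z)$ appearing there is exactly the missing $D_h$ above; if you want a self-contained argument avoiding the citation of \cite{IsLi89}, completing that functional-equation route (and then justifying $z=1$) is the cleanest way to finish your plan.
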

\begin{proof}
  Consider the following more general continued fraction, containing
  a new variable $z$:
  \[
  M(z) =
  \cfrac{1}
  {1+ C - (A+B)z   - \cfrac{(C - ABz)(1-qz)}
    {1+ C - (A+B)qz  - \cfrac{(C - ABqz)(1-q^2z)}
      {1+ C - (A+B)q^2z- \cfrac{(C - ABq^2z)(1-q^3z)} {\ddots}  } }}.
  \]
  Following Ismail and Libis~\cite{IsLi89} (see also
  Identity 19.2.11a in the Handbook of Continued Fractions for
  Special Functions~\cite{CPCWJ08}), we have:
  \renewcommand{\arraystretch}{1}
  \[
  M(z) = \frac{1}{1-z} \cdot
  {}_2\phi_1 \left( \left. \begin{matrix}  A,B \\ Cq  \end{matrix} \, \right| q,qz\right) \cdot
  {}_2\phi_1    \left(\left. \begin{matrix} A,B \\ Cq \end{matrix} \, \right| q,z \right)^{-1}.
  \]
  To be able to specialise $z=1$, we can use one of Heine's
  transformations \cite[p.13]{GaRa90}. For $B\neq 0$ we obtain
  \begin{align*}
    M(z) = & \;
    \frac{1}{1-z} \cdot
    \frac{ (Aqz,B,Cq,z;q)_\infty}{(Az,B,Cq,qz ;q)_\infty } \cdot
    {}_2\phi_1\left(\left. \begin{matrix} CB^{-1}q, qz \\ Aqz \end{matrix} \, \right| q,B \right)  \cdot
    {}_2\phi_1
    \left(\left. \begin{matrix} CB^{-1}q,z \\ Az \end{matrix} \, \right| q,B\right)^{-1} \\[3mm]
    = & \;
    \frac{1}{1-Az} \cdot
    {}_2\phi_1
    \left(\left. \begin{matrix} CB^{-1}q,qz \\ Aqz \end{matrix} \,\right| q,B \right) 
    \cdot {}_2\phi_1
    \left(\left. \begin{matrix} CB^{-1}q,z  \\ Az  \end{matrix} \,\right| q,B \right)^{-1}.
  \end{align*}
  In case $B=0$, we have
  \begin{align*}
    M(z) = & \;
    \frac{1}{1-z} \cdot
    \frac{ (Aqz,q,Cq,z;q)_\infty}{(Az,q,Cq,qz ;q)_\infty } \cdot
    {}_1\phi_1\left(\left. \begin{matrix} qz \\ Aqz \end{matrix} \, \right| q,Cq \right)  \cdot
    {}_1\phi_1
    \left(\left. \begin{matrix} z \\ Az \end{matrix} \, \right| q,Cq\right)^{-1} \\[3mm]
    = & \;
    \frac{1}{1-az} \cdot
    {}_1\phi_1
    \left(\left. \begin{matrix} qz \\ Aqz \end{matrix} \,\right| q,Cq \right) 
    \cdot {}_1\phi_1
    \left(\left. \begin{matrix} 1  \\ A  \end{matrix} \,\right| q,Cq \right)^{-1}.
  \end{align*}
\end{proof}

\begin{rem}
  Although the symmetry in $A$ and $B$ is not apparent in
  Equation~\eqref{cfrac_hyp}, it can be seen using one of Heine's
  transformations \cite[p.13]{GaRa90}.
\end{rem}

In the following, we will always use 
\renewcommand{\arraystretch}{1}
\begin{equation} \label{K_phi}
  K(at,bt,ct^2;q)=
   \frac{1}{1-at} \cdot  {}_2\phi_1
   \left( \left. \begin{matrix} cb^{-1}qt,q \\ aqt \end{matrix}\, \right| q,bt \right).
\end{equation}

Besides, it is also possible to use a method giving $M(z)$ as a
quotient of basic hypergeometric series without knowing {\it a
  priori} which identity to use.  This method was employed
in~\cite{CJPR09}, following Brak and Prellberg~\cite{PrBr95}.
Namely, note that the continued fraction expansion of $M(z)$ is
equivalent to the equation:
\begin{equation} \label{eq_fun}
 M(z) = \frac{1}{ 1 - c + (a+b)z - (c-abz)(1-qz)M(qz) }.
\end{equation}
By looking for solutions of the form $M(z) = (1-az)^{-1}
\frac{H(qz)}{H(z)} $, we obtain a linear equation in $H(z)$, which
gives a recurrence for the coefficients of the Taylor expansion of
$H(z)$, which is readily transformed into the explicit form of $H(z)$
as a basic hypergeometric series.

\subsection{Matchings}
\label{sec:matchings}
For matchings, we have $(a,b,c)=(0,0,1)$ and obtain:
\begin{lem}\label{qhermite} 
  \begin{equation*}
    K(0,0,t^2;q) = \sum_{k=0}^{\infty} (-t^2)^k q^{\binom{k+1}{2}}.
  \end{equation*}
\end{lem}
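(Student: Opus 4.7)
My plan is to obtain Lemma~\ref{qhermite} as an immediate specialisation of Proposition~\ref{cfrac1}. For matchings the parameters are $(a,b,c)=(0,0,1)$, so in particular $b=0$ and the relevant formula is~\eqref{cfrac_hypb0} rather than \eqref{cfrac_hyp}. Substituting $A=at=0$ and $C=ct^2=t^2$, the prefactor $1/(1-A)$ collapses to $1$, leaving
\[
K(0,0,t^2;q) \;=\; {}_1\phi_1\!\left(\left.\begin{matrix}q\\0\end{matrix}\right|\,q,\,qt^2\right).
\]

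The second step is simply to unwind this ${}_1\phi_1$ term by term. Since $r=s=1$, the standard convention contributes a factor $(-1)^n q^{\binom{n}{2}}$ per summand, so the $n$-th term equals
\[
\frac{(q;q)_n}{(0;q)_n\,(q;q)_n}\,(-1)^n q^{\binom{n}{2}}(qt^2)^n.
\]
Two cancellations finish the job: $(q;q)_n$ in the numerator cancels the copy of itself in the denominator, and $(0;q)_n = \prod_{i=0}^{n-1}(1-0\cdot q^i)=1$. Each term thus becomes $(-t^2)^n q^{\binom{n}{2}+n} = (-t^2)^n q^{\binom{n+1}{2}}$, and summing over $n\geq 0$ yields the claim.

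The only delicate point — and barely an obstacle — is that $A=0$ is a degenerate specialisation of~\eqref{cfrac_hypb0}: the lower parameter $Aq$ of the ${}_1\phi_1$ vanishes. This is harmless precisely because $(0;q)_n=1$ for every $n\geq 0$, so neither the continued fraction nor the series develops any singularity at this boundary. Everything else is routine; the whole proof really amounts to observing that the basic hypergeometric series of Proposition~\ref{cfrac1} degenerates in this case to the familiar $q$-exponential-type sum on the right-hand side.
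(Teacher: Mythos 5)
Your argument is correct, but it is not the route the paper takes for this particular lemma. In the text, Lemma~\ref{qhermite} is not derived from Proposition~\ref{cfrac1} at all: the paper invokes Penaud's bijective result~\cite{Pen95}, using the combinatorial interpretation of Proposition~\ref{cfrac2} (the coefficient of $t^{2k}$ counts $\mathcal{M}^*_{2k}(0,0,1;q)$) and Penaud's sign-reversing involution on the associated polyominoes, whose unique fixed point carries weight $(-1)^kq^{\binom{k+1}{2}}$; the same combinatorics reappears in Appendix~B, where Proposition~\ref{invol} with $k=j$ recovers exactly this statement. Your proof instead specialises \eqref{cfrac_hypb0} at $A=0$, $C=t^2$, and expands the resulting ${}_1\phi_1$, correctly keeping the extra factor $(-1)^nq^{\binom{n}{2}}$ from the ${}_r\phi_s$ convention with $r=s$ and noting that $(0;q)_n=1$, so the degenerate lower parameter causes no trouble; the exponent bookkeeping $\binom{n}{2}+n=\binom{n+1}{2}$ is right. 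This is a perfectly legitimate alternative, and in fact it is more uniform with how the paper treats the other three cases (Lemmas~\ref{qchar}, \ref{qchar*} and \ref{An_K} are all obtained by expanding the hypergeometric form); it also neatly sidesteps the fact that \eqref{K_phi}, which the paper says it ``will always use,'' is not directly applicable when $b=0$ (one would otherwise have to take the limit $b\to 0$ in $(cb^{-1}qt;q)_n(bt)^n$). What the paper's route buys in exchange is a bijective proof, in keeping with its stated aim of generalising Penaud's construction, whereas your derivation is purely algebraic.
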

Essentially, this was shown by Penaud~\cite{Pen95}, who
enumerated $\mathcal{M}^*_{2k}(0,0,1;q)$ by first
constructing a bijection with parallelogram polyominoes, passing
through several intermediate objects with beautiful names like \lq
cherry trees\rq.  On the polyominoes he was finally able to construct
a weight-preserving, sign-reversing involution, with the only fixed
point having weight $(-1)^k q^{\binom{k+1}{2}}$, corresponding to
weighted Dyck paths with a single peak, and all weights maximal.

\subsection{Set partitions}
\label{sec:set-partitions}
For set partitions, we have $(a,b,c)=\big(0,-1,y(1-q)\big)$ and can
use the following lemma:
\begin{lem} \label{qchar}
  \begin{equation} \label{eq:qchar} 
    K(0,-t,ct^2;q) =
    \sum_{i=0}^{\infty} \sum_{j=0}^{i} t^{i+j} c^j (-1)^i
    q^{\binom{j+1}2} \qbin i j.
\end{equation}
\end{lem}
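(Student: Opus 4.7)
The plan is to apply the hypergeometric form of the continued fraction from Proposition~\ref{cfrac1} and then expand directly. Specialising equation~\eqref{K_phi} with $a=0$ and $b=-1$ (so $at=0$, $bt=-t$), and replacing the symbol $c$ there by the $c$ of the lemma, the hypotheses $A\neq 1$ and $B\neq 0$ are satisfied, so
\[
K(0,-t,ct^2;q)
= {}_2\phi_1\!\left(\left.\begin{matrix} -cqt,\ q \\ 0 \end{matrix}\,\right|\ q,\ -t\right).
\]
This is the first step; it just requires plugging in and simplifying $cb^{-1}qt = -cqt$ and $aqt = 0$.

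Second, I would expand the ${}_2\phi_1$. The bottom parameter is $0$, and since $(0;q)_n = 1$ for all $n\ge 0$, and the $(q;q)_n$ factors in the numerator and denominator cancel, the series collapses to
\[
K(0,-t,ct^2;q) = \sum_{n=0}^\infty (-cqt;q)_n\,(-t)^n.
\]

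Third, I would apply the $q$-binomial theorem
\[
(-z;q)_n \;=\; \sum_{j=0}^n q^{\binom{j}{2}} \qbin{n}{j} z^j
\]
with $z = cqt$. Since $q^{\binom{j}{2}} \cdot q^j = q^{\binom{j+1}{2}}$, this gives
\[
(-cqt;q)_n \;=\; \sum_{j=0}^n q^{\binom{j+1}{2}} \qbin{n}{j} c^j t^j.
\]
Substituting and renaming $n=i$ yields precisely the right-hand side of~\eqref{eq:qchar}.

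There is no real obstacle here; the only thing to check carefully is that the $(0;q)_n = 1$ reduction is legitimate as a formal power series identity in $t$ (it is, since the series in $t$ converges formally for each fixed coefficient), and that the edge case $a=0$ does not violate the hypothesis of Proposition~\ref{cfrac1}. Both points are immediate. Note that one could alternatively apply~\eqref{cfrac_hypb0} after swapping the roles of $a$ and $b$ (which is legitimate by the remark on the symmetry of $K$ in $A$ and $B$), but using~\eqref{K_phi} directly avoids the swap.
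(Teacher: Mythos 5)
Your proof is correct and follows essentially the same route as the paper: specialise Equation~\eqref{K_phi} to get $K(0,-t,ct^2;q)={}_2\phi_1\bigl(\begin{smallmatrix}-cqt,\,q\\ 0\end{smallmatrix}\big|\,q,-t\bigr)=\sum_{i\geq 0}(-cqt;q)_i(-t)^i$, then expand $(-cqt;q)_i$ by the finite $q$-binomial theorem. The only difference is cosmetic: you spell out the $(0;q)_n=1$ and $(q;q)_n$ cancellations and the $A\neq1$, $B\neq0$ check, which the paper leaves implicit.
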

\begin{proof} 
  Using Equation~\eqref{K_phi} we find:
  \[
  K(0,-t,ct^2;q) =
  {}_2\phi_1
  \left(\left. \begin{matrix} -cqt,q \\ 0 \end{matrix} \,\right| q,-t \right) 
  = \sum_{i=0}^\infty (-cqt;q)_i (-t)^i.
  \]
  The proof follows by plugging in the elementary expansion
  \[
  (-cqt;q)_i = \prod_{j=1}^i (1+q^jct) = \sum_{j=0}^{i}
  q^{\binom{j+1}2} \qbin ij c^jt^j.
  \]
\end{proof}
In the appendix, we give a bijective proof of this lemma.

\subsection{Set partitions, modified crossings}
\label{sec:set-partitions-*}
When using the modified definition of crossings in set partitions, we
have $(a,b,c)=\big(-1,y(1-q),0\big)$ and can use the following lemma:
\begin{lem} \label{qchar*}
  \begin{equation} \label{qchar*_eq} K\big(-t,bt,0;q\big) =
    \sum_{i=0}^\infty\sum_{j=0}^i t^i b^j (-1)^{i-j} \qbin i j.
\end{equation}
\end{lem}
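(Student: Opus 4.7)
The plan is to mimic the proof of Lemma \ref{qchar}, using the hypergeometric evaluation \eqref{K_phi} of the continued fraction $K$ and then expanding a reciprocal Pochhammer symbol via the $q$-binomial theorem.

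First, I apply \eqref{K_phi} with $a=-1$, $b=b$ and $c=0$, which gives
\[
K(-t,bt,0;q) \;=\; \frac{1}{1+t}\cdot {}_2\phi_1\!\left( \left. \begin{matrix} 0,\; q \\ -qt \end{matrix}\, \right| q,\, bt \right).
\]
The numerator parameter $0$ makes the series collapse: since $(0;q)_n=1$ and the $(q;q)_n$ factors cancel, the ${}_2\phi_1$ is simply $\sum_{n\geq 0} (bt)^n / (-qt;q)_n$. Next, I absorb the prefactor using the elementary telescoping identity $(1+t)(-qt;q)_n=(-t;q)_{n+1}$, arriving at
\[
K(-t,bt,0;q) \;=\; \sum_{n\geq 0}\frac{(bt)^n}{(-t;q)_{n+1}}.
\]

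Second, I expand each reciprocal Pochhammer symbol by the standard identity
\[
\frac{1}{(z;q)_{n+1}} \;=\; \sum_{k\geq 0}\qbin{n+k}{k}\,z^k,
\]
which is just the $q$-binomial theorem applied to $(q^{n+1}z;q)_\infty/(z;q)_\infty$. Specialising at $z=-t$ and substituting yields the double sum
\[
K(-t,bt,0;q) \;=\; \sum_{n,k\geq 0}(-1)^k\,b^n\,t^{n+k}\qbin{n+k}{k}.
\]

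Finally, the change of index $i=n+k$, $j=n$ (so that $k=i-j$ and $0\leq j\leq i$) converts this into
\[
\sum_{i\geq 0}\sum_{j=0}^{i}(-1)^{i-j}\,b^j\,t^i\qbin{i}{j},
\]
which is exactly \eqref{qchar*_eq}.

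There is no substantive obstacle: every manipulation is routine. The only points requiring care are the correct simplification of the ${}_2\phi_1$ when a numerator parameter vanishes, and keeping the signs straight in $(-qt;q)_n$ versus $(-t;q)_{n+1}$; both are immediate from the definitions. One could alternatively start from the symmetric form $K(bt,-t,0;q)$ mentioned in the remark following \eqref{K_phi}, which leads to the same calculation with the roles of $b$ and $-1$ swapped.
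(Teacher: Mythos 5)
Your proof is correct and follows essentially the same route as the paper: specialise \eqref{K_phi} to $(a,b,c)=(-1,b,0)$, simplify the ${}_2\phi_1$ with the vanishing numerator parameter, absorb the factor $\frac{1}{1+t}$ to get $\sum_{i\geq 0}(bt)^i/(-t;q)_{i+1}$, and expand $1/(-t;q)_{i+1}$ by the $q$-binomial series. The only difference is that you spell out the final re-indexing (and implicitly the symmetry $\qbin{i}{i-j}=\qbin{i}{j}$), which the paper leaves to the reader.
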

\begin{proof}
  Using Equation~\eqref{K_phi} we find:
  \[
  K(-t,bt,0;q) =\frac{1}{1+t} {}_2\phi_1 \left(\left. \begin{matrix}
        0,q \\ -qt \end{matrix} \,\right| q,bt \right) =
  \sum_{i=0}^\infty \frac{1}{(-t;q)_{i+1}} (bt)^i.
  \]
  The proof follows by plugging in the elementary expansion
  \[
  \frac{1}{(-t;q)_{i+1}} = \prod_{j=1}^i \frac{1}{1+q^jt} =
    \sum_{j\geq 0}
    \qbin{i+j}{j} (-t)^j.
  \]
\end{proof}

\subsection{Permutations}
\label{sec:permutations}
In the case of permutations, we have $(a,b,c)=(-1,-yq,y)$ and find:
\begin{lem} \label{An_K}
\[
   K(-t,-yqt,yt^2;q) = \sum_{k=0}^{\infty} (-t)^k 
   \left( \sum_{i=0}^k y^i q^{i(k+1-i)} \right).
\]
\end{lem}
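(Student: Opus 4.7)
The plan is to specialize Equation~\eqref{K_phi} at $a=-1$, $b=-yq$, $c=y$ and then massage the resulting basic hypergeometric series into a double sum that can be reindexed to match the claim.

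First, I compute the relevant parameters: $cb^{-1}qt = y(-yq)^{-1}qt = -t$, $aqt = -qt$, $bt = -yqt$, and $1-at = 1+t$. Thus Equation~\eqref{K_phi} gives
\[
K(-t,-yqt,yt^2;q) = \frac{1}{1+t}\cdot {}_2\phi_1\!\left(\left.\begin{matrix} -t,\,q \\ -qt \end{matrix}\,\right| q,\,-yqt\right).
\]
The key observation is that the ratio of $q$-Pochhammer symbols telescopes:
\[
\frac{(-t;q)_i}{(-qt;q)_i} = \frac{1+t}{1+q^{i}t},
\]
so that the prefactor $\frac{1}{1+t}$ cancels cleanly. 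Since $(q;q)_i/(q;q)_i=1$, the series collapses to
\[
K(-t,-yqt,yt^2;q) = \sum_{i\geq 0} \frac{(-yqt)^i}{1+q^{i}t}.
\]

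Next I expand each factor $\frac{1}{1+q^{i}t}$ as a geometric series in $t$, namely $\sum_{j\geq 0}(-1)^j q^{ij} t^j$. This yields
\[
K(-t,-yqt,yt^2;q) = \sum_{i,j\geq 0} (-1)^{i+j}\, y^{i}\, q^{i(j+1)}\, t^{i+j}.
\]
Setting $k=i+j$ and summing over $i$ from $0$ to $k$ gives exactly $\sum_{k\geq 0}(-t)^k\sum_{i=0}^k y^i q^{i(k+1-i)}$, as claimed.

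No step here is genuinely hard: the only place where one has to be careful is the Pochhammer telescoping, since this is what allows the prefactor $1/(1+t)$ from Equation~\eqref{K_phi} to disappear. Once this cancellation is observed, the remainder is a straightforward double-sum manipulation. Alternatively, one could avoid Equation~\eqref{K_phi} entirely and use the functional equation approach mentioned after Equation~\eqref{eq_fun}: one verifies that the proposed closed form satisfies the same recurrence as $K(-t,-yqt,yt^2;q)$ and has the correct constant term, but this seems slightly longer than the direct hypergeometric argument above.
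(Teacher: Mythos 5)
Your proposal is correct and follows the paper's own proof essentially verbatim: specialise Equation~\eqref{K_phi}, observe the Pochhammer cancellation giving $\sum_{i\geq 0}(-yqt)^i/(1+q^i t)$, expand geometrically and set $k=i+j$. The only difference is that you spell out the telescoping of $(-t;q)_i/(-qt;q)_i$ explicitly, which the paper leaves implicit.
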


\begin{proof} 
  Using Equation~\eqref{K_phi}, we have:
  \begin{align*}
    K(-t,-yqt,yt^2;q) = & \; 
    \frac{1}{1+t} \cdot  {}_2\phi_1
    \left(\left. \begin{matrix} -t,q \\ -qt \end{matrix} \, \right| q,-yqt \right) 
    = \sum_{i=0}^\infty  \frac{ (-yqt)^i }{ 1 + tq^i  } \\
    =& \; \sum_{i=0}^\infty \sum_{j=0}^\infty (-yqt)^i (-tq^i)^j
    =  \sum_{i=0}^\infty \sum_{j=0}^\infty (-t)^{i+j} y^i q^{i(j+1)}.
  \end{align*}
  To finish the proof it only remains to substitute $k$ for $i+j$.
\end{proof}

\section{Conclusion}

Let us briefly summarize how the four theorems announced in the introduction can
be proved using the previous sections. In each case, the enumeration of crossings
in combinatorial objects is linked with the enumeration of the weighted Motzkin
paths in $\mathcal{M}_n(a,b,c,d;q)$. The bijection $\Delta$ shows that the 
generating function of crossings can be decomposed into the generating functions
of the sets $\mathcal{P}_{n,k}(c,d)$ and $\mathcal{M}^*_n(a,b,c;q)$, which in turn
have been obtained in the previous two sections. This fulfills our initial objective 
as stated in the introduction.

\appendix

\section{Inverse relations}
We would like to mention an interesting non-bijective point of
view of the path decomposition given in Section \ref{sec:penaud-decomp}, 
using inverse relations.  Given
two sequences $\{a_n\}$ and $\{b_n\}$, an inverse relation is an
equivalence such as, for example:
\[
  \forall n\geq0, \quad a_n = \sum_{k=0}^n \binom n k b_k 
  \quad \Longleftrightarrow \quad
  \forall n\geq0, \quad b_n = \sum_{k=0}^n \binom n k (-1)^{n-k} a_k.
\]
This particular relation is easily proved by checking that the
(semi-infinite) lower  
triangular matrix $(\binom i j )_{i,j\in\mathbb{N}}$ has an inverse, which 
is $( (-1)^{i+j} \binom i j )_{i,j\in\mathbb{N}}$.  Other relations
of this kind can be found in Chapters~2 and~3 of Riordan's book
`Combinatorial Identities'~\cite{Rio68}.  To prove the Touchard-Riordan
formula~\eqref{eq:Touchard}, let 
$a_{2n}=(1-q)^n\mu_{2n}^{\tilde H}$ and $a_{2n+1}=0$.
We then use the following inverse relation~\cite[Chapter~2, Equation~(12)]{Rio68}:
\begin{equation} \label{inverse_pair1}
  a_n = \sum_{k=0}^{\lfloor \frac n2 \rfloor} 
  \left(\tbinom{n}{k} - \tbinom{n}{k-1}\right) b_{n-2k} 
  \quad \Longleftrightarrow \quad
  b_n = \sum_{k=0}^{\lfloor \frac n2 \rfloor} 
    (-1)^k \tbinom{n-k}{k} a_{n-2k}.
\end{equation}
Again, this can be proved by inverting a lower-triangular matrix.  It remains
to prove that $b_{2k}=(-1)^k q^{\binom{k+1}2}$ and $b_{2k+1}=0$.  To this end, we
relate the sequence $b_n$ to Schröder paths:

\begin{defi}
A {\it Schröder path} of length $2n$ is a path in $\mathbb{N}\times\mathbb{N}$ starting at 
$(0,0)$, arriving at $(2n,0)$ with steps $(1,1)$, $(1,-1)$, or $(2,0)$.
\end{defi}

\begin{lem}
Suppose that $a_{2n+1}=0$ and $a_{2n}$ is the generating function of Dyck paths of length
$2n$, with weight $1-q^{h+1}$ on each north-east step starting at height $h$ 
(this is to say $a_{2n}=(1-q)^n\mu_{2n}^{\tilde H}$). Suppose that $a_n$ and $b_n$ are related
by \eqref{inverse_pair1}. Then $b_{2n+1}=0$, and $b_{2n}$ is the generating function of Schröder
paths of length $2n$, with weight $-1$ on each level step, and $1-q^{h+1}$ on each north-east 
step starting at height $h$.
\end{lem}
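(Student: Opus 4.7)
\medskip

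\noindent\textbf{Proof plan.} The strategy is to unpack $b_n$ via the explicit inverse formula and identify the result with a weighted sum over Schröder paths through a simple insertion bijection. The inverse pair \eqref{inverse_pair1} supplies
\[
b_n = \sum_{k=0}^{\lfloor n/2\rfloor}(-1)^k \binom{n-k}{k}a_{n-2k},
\]
so the lemma reduces to a direct combinatorial manipulation of this expression.

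For odd $n = 2N+1$, every term $a_{n-2k}$ appearing on the right has odd index and therefore vanishes by hypothesis, which gives $b_{2N+1}=0$ immediately.

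For the even case $n=2N$, I would set up a bijection between Schröder paths of length $2N$ and pairs consisting of a Dyck path of length $2N-2k$ together with a choice of positions among the $2N-k$ slots for the $k$ level steps. Indeed, a Schröder path with exactly $k$ level steps has $2N-k$ total steps, and removing its level steps yields a Dyck path of length $2N-2k$; conversely, any interleaving of $k$ level steps among the $2N-2k$ north-east and south-east steps of a Dyck path produces a valid Schröder path, since level steps preserve height and cannot break the nonnegativity constraint. The number of such interleavings is $\binom{2N-k}{k}$. The weight assignment splits cleanly: each level step contributes $-1$, and since inserting level steps does not change the height at which any north-east step starts, the product $\prod_{\mathrm{NE}}(1-q^{h+1})$ over a Schröder path equals the same product over its underlying Dyck path. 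Summing over $k$ gives the Schröder-path generating function as
\[
\sum_{k=0}^{N}(-1)^k \binom{2N-k}{k} a_{2N-2k} = b_{2N},
\]
which is exactly the desired identification.

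I do not anticipate a substantive obstacle: the whole argument consists of the insertion bijection together with the elementary height-preservation observation, both of which are essentially immediate once stated. The only place where one might want to double-check is the count $\binom{2N-k}{k}$ of interleavings, but this is a one-line stars-and-bars (or direct word-counting) verification.
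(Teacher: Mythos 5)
Your proof is correct and follows essentially the same route as the paper: use the explicit inverse formula $b_n=\sum_k(-1)^k\binom{n-k}{k}a_{n-2k}$, note the odd case is trivial, and identify the even case via the insertion bijection between Schröder paths with $k$ level steps and Dyck paths of length $n-2k$ with $\binom{n-k}{k}$ interleavings, the level steps contributing $(-1)^k$ and leaving the north-east step heights (and hence their weights) unchanged. The only difference is that you spell out the weight-preservation and the odd-index case a bit more explicitly than the paper does.
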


\begin{proof}
  For any even $n$, consider a Schröder path of length $n$ with $k$ level
  steps, weighted as described above.  This path has $n-k$ non-level
  steps, and can thus be obtained from a Dyck path of length $n-2k$
  by inserting the level steps.  There are $\binom {n-k}{k}$ ways to
  do so, which implies that the generating function of Schröder paths
  is indeed equal to $\sum_{k=0}^{\lfloor \frac n2 \rfloor} (-1)^k
  \tbinom{n-k}{k} a_{n-2k}$, and therefore equal to $b_n$.
\end{proof}

With the bijective decomposition of paths in Section
\ref{sec:penaud-decomp}, we showed that we have to obtain the
generating function of the set $\mathcal{M}^*_k(0,0,1;q)$ to prove
Touchard-Riordan formula. With the inverse relations, we showed that
we have to count certain weighted Schröder paths.  The fact that both
sets have the same generating function follows from the involution
given in the proof of Proposition~\ref{cfrac2}.

It is also possible to use an inverse relation to obtain the formula
in Theorem~\ref{th:Laguerre} for the $q$-Laguerre moments.
By inverting a lower triangular matrix, one can check that
\begin{equation}
  a_n = \sum_{k=0}^n \left( \sum_{j=0}^k y^j \Big( \tbinom{n}{j}\tbinom{n}{j+k} -
        \tbinom{n}{j-1}\tbinom{n}{j+k+1}\Big) \right) b_k
\end{equation}
for all $n$ is equivalent to 
\begin{equation} \label{inv22}
  b_n = \sum_{k=0}^n \left( \sum_{j=0}^{\lfloor \frac{n-k}2\rfloor} \binom{n-j}{n-k-j}
  \binom{n-k-j}j (-y)^j (-1-y)^{n-k-2j} \right) a_k
\end{equation}
for all $n$.  

Equation~\eqref{inv22} can be interpreted as follows: given that
$a_k$ counts elements of $\mathcal{M}_n(1,-yq,y,1+y;q)$, then $b_n$
count paths of length $n$, with the same weights as in
$\mathcal{M}_n(1,-yq,y,1+y;q)$, where we insert some level steps
$\rightarrow$ with weight $-1-y$, and some double level steps
$\longrightarrow$ with weight $-y$.

Indeed, suppose that we inserted $j$ double-level steps $\longrightarrow$, 
and hence $n-k-2j$ level steps $\rightarrow$, starting with a path of length $k$.  
This yields the weight $(-y)^j (-1-y)^{n-k-2j}$ for the inserted
steps.  The first binomial coefficient,
$\binom{n-j}{n-k-j}$, is the number of ways to insert the $n-k-j$ level steps among the 
$n-j$ steps (the total number of steps being $n-j$ because the length
is $n$, and $j$ steps of double length). 

The second binomial coefficient, $\binom{n-k-j}{j}$, is the number of combinations of the 
$j$ inserted double-level steps and the $n-k-2j$ inserted level steps.
Using the involution given in the proof of Proposition \ref{cfrac2}, we see
that $b_n$ counts elements in the set $\mathcal{M}_n^*(1,-yq,y;q)$.

\section{A bijective proof of Lemma~\ref{qchar}}
\label{sec:biject-proof-lemma}

We show in this appendix that Penaud's bijective method of proving
Lemma~\ref{qhermite} can be generalised to prove also
Lemma~\ref{qchar}.  Namely, we construct a sign-reversing involution
on a set of weighted Motzkin paths, whose fixed points are enumerated
by the right-hand side of \eqref{eq:qchar}.  This involution was
essentially given by the first author in \cite{MJV} in a different
context.  We take the opportunity to correct some mistakes in 
this reference.

In the following we fix integers $j,k\geq0$ and consider the set
$\mathcal{C}_{j,k}$ of Motzkin paths of length $k+j$ with $k-j$ level
steps $\rightarrow$, (and hence $j$ up steps $\nearrow$ and $j$ down
steps $\searrow$), satisfying the following conditions:
\begin{itemize}
\item the weight of all up steps $\nearrow$ is $1$,
\item the weight of a level step $\rightarrow$ at height $h$ is $-q^h$,
\item the weight of a down step $\searrow$ starting at height $h$ is
  either $1$ or $-q^h$,
\item there is no peak $\nearrow\searrow$ such that both the up step
  and the down step have weight $1$.
\end{itemize}
The generating function of $\mathcal{C}_{j,k}$ is then the
coefficient of $a^kt^{k+j}$ in $K(0,-t,at^2;q)$.  It thus suffices
to prove the following:
\begin{prop}\label{invol}
  There is an involution $\theta$ on the set $\mathcal{C}_{j,k}$ such
  that:
  \begin{itemize}
  \item the fixed points are the paths that
    \begin{itemize}
    \item start with $j$ up steps $\nearrow$,
    \item and contain no down steps $\searrow$ of weight $1$,
    \end{itemize}
  \item the weight of a path that is not fixed under the involution
    and the weight of its image add to zero.
  \end{itemize}
  Moreover, the sum of weights of fixed points of the involution is
  $(-1)^kq^{\binom{j+1}{2}}\qbin kj$.
\end{prop}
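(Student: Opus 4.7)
The argument splits into a weight computation for the fixed points and a construction of the involution.

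For the fixed points: each such path has the form $\nearrow^j\cdot w$, where $w$ is a word of length $k$ containing $j$ down steps and $k-j$ level steps, with every step of $w$ weighted by $-q^h$ for its starting height $h$. Encoding $w$ by the set $S=\{p_1<\cdots<p_j\}\subseteq\{1,\ldots,k\}$ of positions of its down steps, a straightforward induction on $m$ shows that the sum of starting heights of the $k$ steps of $w$ equals $\sum_m p_m$, so the weight of this fixed point is $(-1)^k q^{\sum p_m}$. Summing over all $j$-subsets of $\{1,\ldots,k\}$ yields the claimed value $(-1)^k q^{\binom{j+1}{2}}\qbin{k}{j}$, via the classical identity $\sum_{|S|=j}q^{\sum S}=q^{\binom{j+1}{2}}\qbin{k}{j}$.

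For the involution: a path in $\mathcal{C}_{j,k}$ is non-fixed precisely when it either (I) does not begin with $j$ up steps, or (II) contains a down step of weight $1$, or both. The plan is to locate the leftmost position at which such a defect occurs and to carry out a local transformation that exchanges the two defect types. In case (II), say with the leftmost $\searrow$ of weight $1$ at position $p$ starting at height $h$, I would identify the matched opening up step $u$ (the innermost currently open up step at position $p$) and slide $u$ to the right past a canonically specified block of steps lying between $u$ and $p$, simultaneously toggling the weight of the down step at $p$ between $1$ and $-q^h$. Case (I) is handled by the inverse slide, applied at the leftmost $\rightarrow\nearrow$ pattern deviating from the initial ascent. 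Involutivity then reduces to verifying that the leftmost defect of the transformed path is located exactly where the move was performed.

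The main obstacle is designing the local move so that it is genuinely sign-reversing. The weight change $1\mapsto-q^h$ at position $p$ contributes a factor $-q^h$, which must be offset by factors of $q^{-1}$ arising from the downward reheighting of level steps over which the up step is slid (each level step moved from starting height $h'$ to $h'-1$ changes weight from $-q^{h'}$ to $-q^{h'-1}$), and in degenerate configurations where not enough level steps are available, by concomitant weight toggles on intervening down steps whose weight options also shift with their height. The combinatorial content of the proof, following \cite{MJV}, is precisely the specification of the block over which the swap is performed, chosen so that these various contributions invariably multiply to $-1$, so that the image lies in $\mathcal{C}_{j,k}$ (in particular respects the forbidden-peak condition), and so that the leftmost defect of the image is situated at the endpoint of the swapped block; once this is pinned down, the remaining checks are routine local verifications.
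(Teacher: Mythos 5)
Your computation of the total weight of the fixed points is correct and essentially the paper's: a fixed point is $\nearrow^j$ followed by $j$ down steps and $k-j$ level steps, all weighted $-q^{h}$, and whether you parametrize by the positions $S$ of the down steps (using $\sum_{|S|=j}q^{\sum S}=q^{\binom{j+1}{2}}\qbin kj$) or, as the paper does, by the weakly decreasing heights of the level steps, one gets $(-1)^kq^{\binom{j+1}{2}}\qbin kj$.

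The gap is in the construction of $\theta$, which is the heart of the proposition: you give only a plan. You yourself state that the combinatorial content is ``the specification of the block over which the swap is performed,'' and you do not specify it; nor do you say how a path exhibiting both defects (failing to start with $j$ up steps \emph{and} containing a weight-$1$ down step) is assigned to one of your two cases, why the ``leftmost defect'' of the image sits exactly where the move was made (needed for involutivity), or why the forbidden-peak condition is preserved. Moreover, your proposed bookkeeping -- offsetting the factor $-q^{h}$ from the toggle by factors $q^{-1}$ coming from level steps slid over, with further toggles in ``degenerate configurations'' -- is not obviously realizable, since nothing guarantees that the number of available intervening level steps matches $h$. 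The paper's involution avoids all such height bookkeeping and works at the \emph{right} end of the path: writing up, level, weight-$1$ down and weight-$(-q^h)$ down steps as $x,z,y,\bar y$, let $u(c)$ be the length of the last run of $x$'s and $v(c)$ the starting height of the last $y$ provided no $x$ follows it (and $v(c)=j$ otherwise); fixed points are exactly those with $u(c)=v(c)=j$. If $v(c)\le u(c)$ (not both $j$), replace the last $y$ by $\bar y$, factor the result uniquely as $f_1x^{u(c)}ay^{\ell}f_2$ with $a\in\{z,\bar y\}$ and $f_2$ containing no $x$, and set $\theta(c)=f_1x^{u(c)-v(c)}ay^{\ell}x^{v(c)}f_2$; the toggle multiplies the weight by $-q^{v(c)}$, while lowering the block $ay^{\ell}$ (whose only height-dependent step is $a$) by $v(c)$ divides it by $q^{v(c)}$, so weights are opposite, and $u(\theta(c))=v(c)<v(\theta(c))$ shows the two regimes $v\le u$ and $u<v$ are exchanged, giving an involution. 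Without a comparably precise and verified construction, your argument does not establish the proposition.
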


Penaud's method consists in introducing several intermediate objects as 
described in Section \ref{sec:matchings}. However, in the case at hand we will 
not use intermediate objects, but rather construct the involution directly on
the paths. What we give is a generalisation of Penaud's construction, 
which we recover in the case $k=j$. 

\begin{proof} 
Following Penaud~\cite{Pen95}, we use in this proof a word notation for elements in 
$\mathcal{C}_{j,k}$. The letters $x$, $z$, $y$, and $\bar{y}$ will respectively denote
the steps $\nearrow$,  $\rightarrow$, $\searrow$ with weight $1$, and $\searrow$ with 
weight $-q^h$. For any word $c\in\mathcal{C}_{j,k}$, we define:
\begin{itemize}
\item $u(c)$ as the length of the last sequence of consecutive $x$,
\item $v(c)$ as the starting height of the last step $y$, if $c$ contains a $y$
  and there is no $x$ after the last $y$, and $j$ otherwise.
\end{itemize}
See Figure \ref{invol_cores} for an example. The fixed points of  $\theta$ will be
$c\in\mathcal{C}_{j,k}$ such that $u(c)=v(c)=j$, which correspond to the paths described
in Proposition \ref{invol}.

\smallskip

now, suppose $c$ is such that $u(c)<j$ or $v(c)<j$. We will build
$\theta$ so that $v(c)\leq u(c)$ if and only if $u(\theta(c)) < v(\theta(c))$.
Thus it suffices to define $\theta(c)$ in the case $v(c)\leq u(c)$, and to check
that we have indeed $u(\theta(c)) < v(\theta(c))$. So let us suppose $v(c)\leq u(c)$, 
hence $v(c)<j$.

\smallskip

Since $v(c)<j$, there is at least a letter $y$ in $c$ 
having no $x$ to its right. Let $\tilde{c}$ be the word obtained from $c$ 
by replacing the last $y$ with a $\bar{y}$. There is a unique factorisation
 \[ \tilde{c}=f_1x^{u(c)} ay^{\ell}f_2 \] 
such that:
\begin{itemize}
\item $a$ is either $z$, or $\bar{y}$,
\item $f_2$ begins with $z$ or $\bar{y}$, contains at least one
  letter $\bar{y}$, but contains no $x$.
\end{itemize}
Let us explain this factorisation. By definition of $u(c)$, we can write 
$\tilde{c}=f_1x^{u(c)} c'$, where $c'$ does not contain any $x$. In a word
$c\in\mathcal{C}_{j,k}$, an $x$ cannot be followed by a $y$. So we can write
$\tilde{c}=f_1x^{u(c)} a c''$ where $a$ is either $z$, or $\bar{y}$. Then, we write
$c''=y^\ell f_2$ with $\ell\geq0$ maximal, and $f_2$ satisfy the conditions
($f_2$ contains indeed a $\bar y$ because we transformed a $y$ into a $\bar y$). 
Uniqueness is immediate.

\smallskip

We set:
\begin{equation} \label{def_theta}
  \theta(c) =  f_1x^{u(c)-v(c)}ay^\ell x^{v(c)}f_2.
\end{equation}
See Figure \ref{invol_cores} for an example with $u(c)=4$, $v(c)=2$, $j=9$ and $k=12$.
We can check that $w(c)=-q^{19}= - w(\theta(c))$, and $u(\theta(c))=2$, $v(\theta(c))=3$.

\begin{figure}[h!tp]  \center\psset{unit=3mm}
\begin{pspicture}(0,-1)(21,8)
\psgrid[gridcolor=gray,griddots=4,subgriddiv=0,gridlabels=0](0,0)(21,6)
\psline(0,0)(1,1)(2,2)(3,1)(4,2)(5,2)(6,1)(7,2)(8,3)(9,2)(10,3)(11,4)(12,5)(13,6)
(14,6)(15,5)(16,4)(17,3)(18,2)(19,1)(20,1)(21,0)
\psline[linewidth=0.8mm]{c-c}(2,2)(3,1)  \psline[linewidth=0.8mm]{c-c}(8,3)(9,2)
\psline[linewidth=0.8mm]{c-c}(16,4)(17,3)\psline[linewidth=0.8mm]{c-c}(20,1)(21,0)
\psline[linewidth=0.8mm]{c-c}(4,2)(5,2)  \psline[linewidth=0.8mm]{c-c}(19,1)(20,1)
\psline[linewidth=0.8mm]{c-c}(13,6)(14,6) \rput(-2,3){$c=$}
\rput(0.5,-1){$x$}\rput(1.5,-1){$x$}\rput(2.5,-1){$\bar y$}\rput(3.5,-1){$x$}\rput(4.5,-1){$z$}
\rput(5.5,-1){$y$}\rput(6.5,-1){$x$}\rput(7.5,-1){$x$}\rput(8.5,-1){$\bar y$}\rput(9.5,-1){$x$}
\rput(10.5,-1){$x$}\rput(11.5,-1){$x$}\rput(12.5,-1){$x$}\rput(13.5,-1){$z$}
\rput(14.5,-1){$y$}\rput(15.5,-1){$y$}\rput(16.5,-1){$\bar y$}\rput(17.5,-1){$y$}
\rput(18.5,-1){$y$}\rput(19.5,-1){$z$}\rput(20.5,-1){$\bar y$}
\psline{<->}(12.9,6.5)(9.1,6.5)
\rput(11.5,7.3){$u(c)=4$}
\rput(23,3.2){$v(c)=2$}\psline{->}(21.5,2.8)(18.6,1.6)
\end{pspicture} \\[1.4cm]
\begin{pspicture}(0,-1)(21,8)
\psgrid[gridcolor=gray,griddots=4,subgriddiv=0,gridlabels=0](0,0)(21,6)
\psline(0,0)(1,1)(2,2)(3,1)(4,2)(5,2)(6,1)(7,2)(8,3)(9,2)(10,3)(11,4)(12,5)(13,6)
(14,6)(15,5)(16,4)(17,3)(18,2)(19,1)(20,1)(21,0)
\psline[linewidth=0.8mm]{c-c}(2,2)(3,1)  \psline[linewidth=0.8mm]{c-c}(8,3)(9,2)
\psline[linewidth=0.8mm]{c-c}(16,4)(17,3)\psline[linewidth=0.8mm]{c-c}(20,1)(21,0)
\psline[linewidth=0.8mm]{c-c}(4,2)(5,2)  \psline[linewidth=0.8mm]{c-c}(19,1)(20,1)
\psline[linewidth=0.8mm]{c-c}(13,6)(14,6) \psline[linewidth=0.8mm]{c-c}(18,2)(19,1)
\rput(-2,3){$\tilde c=$}
\rput(0.5,-1){$x$}\rput(1.5,-1){$x$}\rput(2.5,-1){$\bar y$}\rput(3.5,-1){$x$}\rput(4.5,-1){$z$}
\rput(5.5,-1){$y$}\rput(6.5,-1){$x$}\rput(7.5,-1){$x$}\rput(8.5,-1){$\bar y$}\rput(9.5,-1){$x$}
\rput(10.5,-1){$x$}\rput(11.5,-1){$x$}\rput(12.5,-1){$x$}\rput(13.5,-1){$z$}
\rput(14.5,-1){$y$}\rput(15.5,-1){$y$}\rput(16.5,-1){$\bar y$}\rput(17.5,-1){$y$}
\rput(18.5,-1){$\bar y$}\rput(19.5,-1){$z$}\rput(20.5,-1){$\bar y$}
\psline{<->}(8.9,6.5)(0.1,6.5)
\psline{<->}(12.9,6.5)(9.1,6.5)
\psline{<->}(15.9,6.5)(13.1,6.5)
\psline{<->}(20.9,6.5)(16.1,6.5)
\rput(5,7.3){$f_1$}
\rput(11.5,7.3){$x^{u(c)}$}
\rput(14.5,7.3){$ay^\ell $}
\rput(18,7.3){$f_2$}
\end{pspicture} \\[1.4cm]
\begin{pspicture}(0,-2)(21,8)
\psgrid[gridcolor=gray,griddots=4,subgriddiv=0,gridlabels=0](0,0)(21,6)
\psline(0,0)(1,1)(2,2)(3,1)(4,2)(5,2)(6,1)(7,2)(8,3)(9,2)(10,3)(11,4)(12,4)(13,3)
(14,2)(15,3)(16,4)(17,3)(18,2)(19,1)(20,1)(21,0)
\psline[linewidth=0.8mm]{c-c}(2,2)(3,1)  \psline[linewidth=0.8mm]{c-c}(8,3)(9,2)
\psline[linewidth=0.8mm]{c-c}(16,4)(17,3)\psline[linewidth=0.8mm]{c-c}(18,2)(19,1)
\psline[linewidth=0.8mm]{c-c}(20,1)(21,0)
\psline[linewidth=0.8mm]{c-c}(4,2)(5,2)  \psline[linewidth=0.8mm]{c-c}(19,1)(20,1)
\psline[linewidth=0.8mm]{c-c}(11,4)(12,4) \rput(-2,3){$\theta(c)=$}
\rput(0.5,-1){$x$}\rput(1.5,-1){$x$}\rput(2.5,-1){$\bar y$}\rput(3.5,-1){$x$}\rput(4.5,-1){$z$}
\rput(5.5,-1){$y$}\rput(6.5,-1){$x$}\rput(7.5,-1){$x$}\rput(8.5,-1){$\bar y$}\rput(9.5,-1){$x$}
\rput(10.5,-1){$x$}\rput(11.5,-1){$x$}\rput(12.5,-1){$x$}\rput(13.5,-1){$z$}
\rput(14.5,-1){$y$}\rput(15.5,-1){$y$}\rput(16.5,-1){$\bar y$}\rput(17.5,-1){$y$}
\rput(18.5,-1){$\bar y$}\rput(19.5,-1){$z$}\rput(20.5,-1){$\bar y$}
\psline{<->}(8.9,6.5)(0.1,6.5)
\psline{<->}(10.9,6.5)(9.1,6.5)
\psline{<->}(13.9,6.5)(11.1,6.5)
\psline{<->}(15.9,6.5)(14.1,6.5)
\psline{<->}(20.9,6.5)(16.1,6.5)
\rput(4,7.3){$f_1$}
\rput(9,7.6){$x^{u(c)-v(c)}$}
\rput(12.8,7.3){$ay^\ell $}
\rput(15.3,7.6){$x^{v(c)}$}
\rput(19,7.3){$f_2$}
\end{pspicture}
\mycaption{ An element $c$ in $\mathcal{C}_{j,k}$, 
and its image $\theta(c)$. Thick steps are the steps $\bar{y}$ and $z$, 
{\it i.e.} steps with weight $-q^h$. 
\label{invol_cores}}
\end{figure}

\noindent
We show the following points:
\begin{itemize}
\item The path $\theta(c)$ is a Motzkin path. Indeed, the factor $ay^j$ in
  $c$ ends at height at least $v(c)$, since the factor $f_2$ contains a step $\bar y$ 
  starting at this height and contains no $x$. We can thus shift this factor
  $\tilde c$ so that the result is again a Motzkin path.
\item The path $c$ and its image $\theta(c)$ have opposite weights. To begin, 
  between $c$ and $\tilde c$, the weight is multiplied by $-q^{v(c)}$, 
  since we have transformed a $y$ into a $\bar y$ starting at height $v(c)$. 
  Between $\tilde c$ and $\theta(c)$, the height of the factor $ay^j$ has
  decreased by $v(c)$, so the weight has been divided by $q^{v(c)}$.
  A factor $-1$ remains, which proves the claim.
\item The path $\theta(c)$ is such that $u(\theta(c))<v(\theta(c))$. From the 
  definition (\ref{def_theta}) we see that $u(\theta(c))=v(c)$. Besides, 
  $v(c) < v(\theta(c))$ since the last step $y$ of $c$ has been transformed into 
  a $\bar{y}$ to obtain $\tilde c$ and $\theta(c)$.
\item Every path $c'$ with $u(c')<v(c')$ is obtained as a $\theta(c)$ for some other path
  $c$ with $u(c)\geq v(c)$. Indeed, let $\tilde{c}'$ be the word obtained from $c'$ after 
  replacing the last $\bar{y}$ at height $u(c')$ with a $y$. There is a unique factorisation
  $\tilde{c}'=f_1ay^jx^{u(c')}f_2$, where $a$ is $z$ or $\bar{y}$, and $f_2$ contains no
  $x$. Then by construction, $c=f_1x^{u(c')}ay^jf_2$ has the required properties.
\end{itemize}

\medskip

Thus, $\theta$ is indeed an involution with the announced fixed points.

\medskip

It remains only to check that the sum of weights of the fixed points is equal to
$(-1)^kq^{\binom{j+1}{2}}\qbin kj$. A fixed point of $\theta$ is specified by the 
heights $h_1,\dots,h_{k-j}$ of the  $k-j$ steps $\rightarrow$. These heights 
can be any set of values provided that $j\geq h_1 \geq \dots \geq h_{k-j} \geq 0$.
The weight of such a fixed point is:
\[  
  (-1)^jq^{\frac{j(j+1)}2} \prod_{i=0}^{k-j} (-q^{h_i}) 
  =  (-1)^kq^{\frac{j(j+1)}2} q^{\sum_{i=0}^{k-j}h_i}.
\]
Indeed, the $j$ steps $\nearrow$ have respective weights $-q,-q^2,\dots,-q^j$, which gives
a factor $(-1)^jq^{\frac{j(j+1)}2}$. Besides, we have:
\[
        \sum_{j\geq h_1\geq\dots\geq h_{k-j}\geq 0} q^{\sum h_i} = \qbin kj,
\]
by elementary property of $q$-binomial coefficients. This ends the proof.
\end{proof}

\section{Closed forms for $\size{\mathcal{P}_{n,k}(c,d)}$}
\label{sec:trinomial}

In this appendix we give a justification of the fact that there is no
(hypergeometric) closed form (in the sense of Petkov{\v{s}}ek, Wilf
and Zeilberger \cite{AeqB}) for
\begin{equation}
  \size{\mathcal{P}_{n,k}(c,d)}=
  \frac{k+1}{n+1}\sum_{l=0}^{n-k}\binom{n+1}{l}\binom{l}{2l-n+k}d^{2l-n+k}c^{n-k-l}.
\end{equation}
except when $(c,d)$ is one of $(1,0)$, $(0,1)$ or $(y^2,2y)$.  More
precisely, we claim that $\size{\mathcal{P}_{n,k}(c,d)}$ cannot be
written as a linear combination (of a fixed finite number) of
hypergeometric terms except in the specified cases.  In the following
we sketch a straightforward way to check this is using computer
algebra.

First we convert the summation into a polynomial recurrence
equation.  This can be done by using Zeilberger's algorithm (which
also proves that the recurrence is correct), for example.  Writing
$p_n = \size{\mathcal{P}_{n,k}(c,d)}$ we obtain
\begin{equation}
  (4c-d^2)(n+1)(n+2)p_n + d(n+2)(2n+5)p_{n+1}-(n+2-k)(n+4+k)p_{n+2}=0.
\end{equation}
Alternatively, one can also find a recurrence for $q_k =
\size{\mathcal{P}_{n,k}(c,d)}$, which is
\begin{equation}
  (k+3)(k-n)q_k + d(k+1)(k+3) q_{k+1} + c(k+1)(k+n+4)q_{k+2}=0.
\end{equation}

It now remains to show that both equations admit no hypergeometric
solutions, except for the values of $(c,d)$ mentioned above.  To this
end we use Petkov{\v{s}}ek's algorithm {\bf hyper}, as described in
Chapter~8 of `A=B'~\cite{AeqB}.  Unfortunately, this time we
cannot use the implementation naively.  Namely, {\it a priori} {\bf
  hyper} decides only for fixed parameters $(c,d)$ whether a
hypergeometric solutions exists or not.

However, it is possible to trace the algorithm, and, whenever it has
to decide whether a quantity containing $c$ or $d$ is zero or not, do
it for the computer.  (Of course, it should be possible to actually
program this, but that is outside the scope of this article.)  We
refrain from giving a complete proof, but rather give only a few
details to make checking easier.

First of all, let us assume that $c$, $d$ and $d^2-4c$ are all
nonzero.  Then the degrees of the coefficient polynomials in both
recurrence equations are all the same.  From now on, the procedure is
the same for both recurrence equation, so let us focus on the one for
$p_n$.  According to the remark in Example~8.4.2 in~\cite{AeqB}, we
have to consider all monic factors $a(n)$ of the coefficient of
$p_n$, and also the monic factors $b(n)$ of the coefficient of
$p_{n+2}$, such that the degree of $a(n)$ and $b(n)$ coincide.  In
this case, the characteristic equation, Equation~(8.4.5)
in~\cite{AeqB} one has to solve turns out to be $z^2-2dz+d^2-4c$.
For each of the two solutions in $z$, one has to check that there is
no polynomial solution of the recurrence
$$P_0(n)c_n + zP_1(n)c_{n+1} + z^2P_2(n) c_{n+2}=0,$$
where the coefficient polynomials $P_0(n),P_1(n)$ and $P_2(n)$ are
polynomials derived from the coefficient polynomials of the original
recurrence by multiplying with certain shifts of $a(n)$ and $b(n)$.

This can be done with the algorithm {\bf poly}, described in
Section~8.3 of~\cite{AeqB}.  Namely, depending on the degrees of yet
another set of polynomials derived from $P_0(n),P_1(n)$ and $P_2(n)$,
it computes an upper bound for the degree of a possible polynomial
solution.  Indeed, the algorithm decides that the degree of such a
solution would have to be negative, provided that $c$, $d$ and
$d^2-4c$ are nonzero, which is what we assumed.


\section*{Acknowledgement}

This article develops ideas presented in the extended abstract
\cite{CJPR09}.  We thank Sylvie Corteel and Thomas Prellberg, who
contributed considerably to our curiosity about generalising Penaud's
construction.

\end{document}